\newtheorem{theor}{Theorem}
\newtheorem{propo}{Proposition}[section]
\newtheorem{lemma}[propo]{Lemma}
\newtheorem{defin}[propo]{Definition}
\newenvironment{proof}{\noindent{\scshape Proof.}}{\hspace{2mm} $\square$ \\}
\newcommand{\R}{\mathbb{R}}
\newcommand{\Z}{\mathbb{Z}}
\newcommand{\N}{\mathbb{N}}
\newcommand{\super}[1]{^{^{_{\,#1}}}}
\newcommand{\norm}[1]{|\!|#1|\!|}
\newcommand{\ind}{\mathbf{1}}
\newcommand{\Hh}{\hbox{H}}
\newcommand{\T}{\mathbb{T}}
\begin{document}

\begin{frontmatter}
\title {Ergodicity and hydrodynamic limits \\ for an epidemic model}
\runtitle  {Individual recovery process}
\author    {Lamia Belhadji}
\runauthor {Lamia Belhadji}
\address   {Laboratoire de Math\'ematiques Rapha\"el Salem, \\ UMR 6085, CNRS - Universit\'e de Rouen, \\
            Avenue de l'Universit\'e, BP. 12, \\ 76801 Saint Etienne du Rouvray, France.}

\begin{abstract} \ \   We consider two approaches to study 
the spread of infectious diseases within a spatially structured population distributed in 
social clusters.   According whether  we consider only the population of infected individuals or both populations of infected individuals and healthy ones, two  models are given to study an epidemic phenomenon.
Our first approach is at a microscopic level,  its goal is to determine if an epidemic may occur for those models.
The second one  is the derivation of hydrodynamics limits. By using the relative entropy method we prove that the empirical measures of infected and healthy individuals converge to a deterministic measure absolutely continuous with respect to the Lebesgue measure,   whose density is the solution of a system of reaction-diffusion equations.   \\ \\
\end{abstract}

\begin{keyword}[class=AMS]
\kwd[Primary ]{60K35; 82C22}
\kwd[. Secondary ]{92D25.}
\end{keyword}

\begin{keyword}
\kwd{Infinite particle systems, contact process, invariant measures, hydrodynamic limits, epidemic model, coupling, reaction diffusion process.}
\end{keyword}

\end{frontmatter}


\section{Introduction}
\label{Sec: introduction}

\indent We study an epidemic model describing the  course of a single disease within a spatially structured human population distributed in {\it social clusters} of {\it finite or infinite size}. 
That is, each site of the $d$-dimensional integer lattice $\Z^d$ is occupied by a cluster of {\it individuals}, each individual can be {\it healthy} or {\it infected } and the number of infected individuals at each cluster is either bounded or may be infinite. A cluster is said to be infected if it contains at least one infected individual and is said to be healthy otherwise. 
The first model we investigate is an extension of a process introduced in Schinazi (2002), and will be referred to as the \emph{cluster recovery process} (CRP). The second with another recovery mechanism extend a process introduced in Belhadji 
and Lanchier (2006), and will be referred to as the \emph{individual recovery process} (IRP).

\indent For both CRP (Schinazi, 2002) and IRP (Belhadji and Lanchier, 2006), the dynamics depends on three parameters, namely the outside infection rate $\lambda$ (the rate at which an individual infects healthy individuals of other clusters), the within infection rate $\phi$ (the rate at which an individual infects healthy individuals present in the same cluster), and the cluster size $\kappa$ (can be seen as the mean number of individuals having sustained contacts with a given individual).
 In both models, it is assumed that, once a cluster has at least one infected individual, infections within the cluster are a lot
 more likely than additional infections from the outside so we neglect the latter.
 The only difference between the CRP and the IRP is the recovery mechanism.
 For the CRP, all the infected individuals in a given cluster are simultaneously replaced by healthy individuals,
 which follows from the assumption that, once an infected individual is discovered, its social cluster rapidly recovers
 thanks to an antidote. For the IRP, we deal with the other extreme case, that is we assume that at most one infected individual recovers at once, that is the
 tracking system is not effective enough and the infection can spread within a given cluster before it is detected.
 In particular, the CRP and the IRP can be considered as spatial stochastic models for the transmission of
 infectious diseases in developed and developing countries, respectively.

\indent We assume that individuals within  the same cluster have repeated contacts whereas the individuals belonging to neighboring clusters have casual contacts only. This suggests that the infection spreads 
out faster within clusters than between them, this is the reason why we introduce in the IRP and CRP an other outside infection rate $\beta$ (at which an individual infects healthy individuals of other infected clusters). This allows us to take $\beta$ lower than $\phi$, to favorate  within infections. More general than the processes of Schinazi (2002) and Belhadji and Lanchier, we will assume that an outside infection may occur even if the cluster is already infected, 
and to avoid the condition that the number of infected individuals is bounded by $\kappa$ we will deal with IRP and CRP with infinite cluster size.

\indent The first aim of this paper  is to investigate the probability of an epidemic for both processes depending on the value of each of the three parameters $\lambda$, $\beta$ and $\phi$. 

\indent  In both the cluster recovery process (Schinazi, 2002) and individual recovery process (Belhadji and Lanchier, 2006),  
only the population of infected individuals is taken  into account; we will consider more general Markov processes evolving on the 1-dimensional lattice, without any restrictions on the clusters sizes, and with two types of particles, healthy and infected individuals. 

\indent In this model healthy  individuals get infected  with the same infection mechanism as in CRP, infected individuals recover at rate $1$ and moreover individuals are born, die and migrate, the migration of individuals (infected or healthy) speeded up by  renormalizing  parameter $N^2$. By using the relative entropy method, and in particular the works  Mourragui (1996),  Perrut (2000),  we will prove that the process admits hydrodynamic limits, that is by rescaling space  and time the densities of healthy and infected individuals evolve according to nonlinear reaction-diffusion equations.

\section{Presentation of the models and results}
\label{ICRP-Sec: resultats}

\indent In order to investigate the individual and cluster  recoveries processes with infinite cluster size, we start by introducing the evolution of the individual and cluster recoveries processes  with finite cluster size $\kappa\in \N$ denoted respectively  by IRP($\kappa$) and CRP($\kappa$).  The IRP$(\kappa)$ is a continuous-time Markov process in which the state at time $t$ is a function 
$\xi_t : \Z^d \longrightarrow \{0, 1, \ldots, \kappa\}\varsubsetneq \N$, with $\kappa$ denoting 
the common size of the clusters, and $\xi_t (x)$ indicates the number of infected individuals present in the cluster at time $t \geq 0$. To take into account the outside infections, we  introduce an interaction neighborhood. For any $x$, $z \in \Z^d$, $x \sim z$ indicates that site $z$ is one of the $2 d$ 
nearest neighbors of site $x$. Let  the  transition probability:  $\displaystyle p(x,y)= 1/2\,d\, \ind_{\{\norm{x-y}_1\, =\, 1\}},$ 
where $\norm{x - y}_1 = |x_1 - y_1| + \cdots + |x_d - y_d|$. Then, the state of site $x$ flips according to the  transition rates:
  \begin{eqnarray} \vspace{6pt}
 \label{ICRP-a}    0   \ &  \rightarrow  \ &  1    \hspace{5pt} \quad \qquad \textrm{at rate}  \qquad 2d\lambda \ \displaystyle \sum_{z\in \Z^d} \ p(x,z)\, \xi (z) \\ \vspace{12pt}
    \label{ICRP-b}    i   \  & \rightarrow \  &  i + 1  \qquad \textrm{at rate}  \qquad 2d\beta \ \displaystyle \sum_{z\in \Z^d} \ p(x,z)\, \xi (z) \ + \ i \,\phi \qquad i = 1,
2, \ldots, \kappa - 1 \\ \vspace{6pt}
   \label{ICRP-c}     i  \  & \rightarrow  \ &  i - 1   \qquad \textrm{at rate} \qquad i \qquad \hspace{4.1cm} i = 1, 2, \ldots, \kappa. \end{eqnarray} 
That is, a healthy cluster at site $x$ gets infected, i.e. the state of $x$ flips from 0 to 1, at rate $\lambda$ times the number of infected individuals present in the neighboring clusters.
In other respects, if there are $i$ infected individuals in the cluster $x$, $i = 1,2,\ldots,\kappa-1$,
then the state of $x$ flips from $i$ to $i+1$ at rate $\beta$ times the number of infected individuals present in the neighboring clusters plus $i\phi$ (each of infected individual infects healthy ones in the cluster $x$ at rate $\phi$). Finally, each infected individual recovers at rate 1 regardless of the number of infected individuals in its cluster.\\

\indent The CRP$(\kappa)$ is a Markov process $\eta_t : \Z^d \longrightarrow \{0, 1, \ldots, \kappa\}$, with $\eta_t (x)$ denoting the number of infected individuals at site $x$ at time $t \geq 0$, and whose evolution is obtained by replacing the transition \eqref{ICRP-c} above by
 \begin{equation}\label{ICRP-CRKrecovery}\begin{array}{lll}
i \ \rightarrow \ 0 & \qquad \textrm{at rate} & \qquad 1 \qquad \hspace{7pt} i = 1, 2, \ldots, \kappa. \end {array} 
\end{equation}
That is, all the infected individuals in a given cluster are now simultaneously replaced by 
healthy ones at rate 1, the infection mechanism modelled by \eqref{ICRP-a} and \eqref{ICRP-b} 
being unchanged.

\pagebreak

\noindent{\slshape{The graphical representation}} \\ \vspace{-7pt}

\indent An argument of Harris (1972) assures us of the existence and uniqueness of the models IRP($\kappa$) and CRP($\kappa$), for all $\kappa\geq 1$ finite.   For each $x$, $z \in \Z^d$ with $x \sim z$ and $i = 1, 2, \ldots, \kappa$, we let 
$\{T_n \super{x, z, i} : n \geq 1 \}$ (respectively, $\{{\tilde T}_n \super{x, z, i} : n \geq 1 \}$) denote the arrival times of independent Poisson processes with  rate $\lambda$ (respectively, $\beta$). 
To take into account the within infections, we introduce, for 
$x \in \Z^d$ and $i = 1, 2, \ldots, \kappa - 1$, a further collection of independent Poisson processes, denoted by $\{U_n \super{x, i} : n \geq 1 \}$, each of them has rate $\phi$.
Finally, for each $x \in \Z^d$ and $i = 1, 2, \ldots, \kappa$, we 
 let $\{V_n \super{x, i} : n \geq 1 \}$ be the arrival times of independent  rate 1 
Poisson processes.

\indent Given initial configurations $\xi_0$ and $\eta_0$, and the graphical representation introduced above, the process can be constructed as follows.
If there are at least $i$ infected individuals at site $x$ at time $T_n \super{x, z, i}$ 
(respectively, ${\tilde T}_n \super{x, z, i}$), then 
if site  $z$ is in state $j=0$  (respectively, $j, j=1,\ldots \kappa-1$) it flips to $j+1$ for both processes. In other respects, if there are $j$ infected individuals,  where $i \leq j \leq \kappa - 1$, 
at site $x$ at time $U_n \super{x, i}$, then one more individual gets infected in the cluster, i.e., the state of $x$ flips from $j$ to $j + 1$, for both processes.
 Finally, if there are $j$ infected individuals, $1 \leq j \leq \kappa$, at site $x$ at time 
$V_n \super{x, i}$, then the state of $x$ flips from $j$ to $j - 1$ if and only if $i \leq j$ for the process $\xi_t$,  while flips from $j$ to 0 if and only if $i = 1$ for the process $\eta_t$.
In particular, $\times_i$'s, $i = 2, 3, \ldots, \kappa$, have no effect on the process $\eta_t$.\\

\noindent{\slshape{The epidemic behavior of IRP($\infty$) and CRP($\infty$)}} \\ \vspace{-7pt}

\indent Assume now that each cluster may contain an infinite number of individuals. 
The resulting processes are denoted by IRP($\infty$) and CRP($\infty$).
The IRP($\infty$) (respectively, CRP($\infty$)) is a continuous-time Markov process 
in which the state at time $t$ is a function $\xi_t : \Z^d\rightarrow \N$, 
(respectively, $\eta_t : \Z^d\rightarrow \N$).  
The infection mechanism of the IRP$(\infty)$ and CRP($\infty$) 
is then described formally by setting $\kappa=\infty$ in the transitions \eqref{ICRP-a}, \eqref{ICRP-b}.
In the same way, the recovery mechanism of the IRP$(\infty)$ (respectively, CRP($\infty$)) is described by the transition \eqref{ICRP-c} (respectively, \eqref{ICRP-CRKrecovery}).\linebreak  To construct our processes we adopt an other point of view different from the graphical representation which moreover will allowed us to study their ergodicity.  We rely on techniques introduced in Chen (1992) to prove the existence and uniqueness of the IRP($\infty$) and the CRP($\infty$) when $\beta\leq \lambda$.

\indent We now discuss the effects of each of the three parameters, namely the outside infection \linebreak rates $\lambda$ and $\beta$, and the within infection rate $\phi$, on the probability of an epidemic for both models.
 
\indent From now on, we consider the processes starting with a single infected individual at site 0.
\begin{defin} We say that an epidemic may occur when
$$ P \,(\forall \,t \geq 0, \ \exists \ x \in \Z^d : \xi_t (x) \neq 0) \ > \ 0.$$
Otherwise, we say that there is no epidemic. \end{defin}
We prove by using basic coupling that the probability of an epidemic is nondecreasing with respect to the initial configuration and to each of the parameters $\lambda$, $\beta$, and $\phi$.

\indent Note that, when $\phi = 0$ and $\beta = 0$, there can be only one infected individual in each cluster so that both processes IRP($\infty$) and CRP($\infty$)  are identical and reduce to the basic contact process with infection rate $\lambda$,  in this case, there exists a critical value $\lambda_c \in (0, \infty)$ such that
 if $\lambda \leq \lambda_c$ then the processes converge in distribution to the ``all 0'' configuration;
 otherwise, an epidemic may occur. It follows by using basic coupling  that an epidemic may occur whenever $\lambda > \lambda_c$ regardless of the value of the parameters $\phi$ and $\beta$ and through a comparison with a branching random walk, we deduce that the processes IRP($\infty$) and CRP($\infty$) converge to the ``all 0'' configuration when 
$2 d \, \lambda < 1$.
When $\beta$ or $\phi$ are different from 0, the limiting behavior of the process is more complicated to predict due to the combined effects of the three birth rates. We can however by using the ergodicity criterion introduced in Chen (1992) we extend the result in the following way.
\begin{theor}\label{ICRP-tauxfaible}  If 
\begin{equation}\label{ICRP-TF}\phi \, + \, 2 \, d\, \lambda\, < \, 1,\end{equation}
then there is no epidemic for the IRP($\infty$) and CRP($\infty$) with parameters $(\lambda,\beta,\phi)$.
\end{theor}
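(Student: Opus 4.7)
The plan is to show that the total number of infected individuals has expectation decaying exponentially in $t$, whence extinction in finite time almost surely. Let $N_t = \sum_{x \in \Z^d} \xi_t(x)$ for IRP$(\infty)$ (and analogously $\sum_x \eta_t(x)$ for CRP$(\infty)$), and let $\tau = \inf\{t \geq 0 : N_t = 0\}$. Since $N_t \in \N$, Markov's inequality gives $P(\tau > t) = P(N_t \geq 1) \leq \E[N_t]$, so if we establish $\E[N_t] \to 0$ we obtain $P(\tau = \infty) = 0$, i.e.\ no epidemic.

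For the first-moment bound, formally apply the generator $L$ to the linear test function $N(\xi) = \sum_x \xi(x)$. Because $\beta \leq \lambda$, the total outside-infection contribution is at most $\sum_x \sum_{z \sim x} \lambda\,\xi(z) = 2d\lambda\,N(\xi)$; the within-cluster infections contribute exactly $\phi\,N(\xi)$; and each recovery mechanism contributes exactly $-N(\xi)$ (for IRP, at each site $x$ there are $\xi(x)$ unit-rate recoveries each of size $1$, and for CRP there is a single unit-rate event of size $\xi(x)$). Hence, formally,
\begin{equation*}
L N(\xi) \ \leq \ (\phi + 2 d \lambda - 1)\,N(\xi),
\end{equation*}
and, with $c := 1 - \phi - 2d\lambda > 0$, a Gronwall-type inequality gives the desired estimate $\E[N_t] \leq e^{-c t}\,\E[N_0] = e^{-c t}$.

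The main obstacle is making this generator computation rigorous, since $N$ is unbounded on configurations so Dynkin's formula does not apply directly. I would circumvent this by stochastically dominating $(\xi_t)$ by a branching random walk $(\tilde N_t)$ in which each particle at $z$ produces an offspring at a uniformly chosen neighbor at rate $2d\lambda$, reproduces at its own site at rate $\phi$, and dies at rate $1$. Because $\beta \leq \lambda$, every potential infection in $\xi$ has rate no greater than the corresponding birth rate in $\tilde N$, so the domination can be constructed first on finite boxes via IRP$(\kappa)$ and CRP$(\kappa)$, for which the finite-state coupling is routine, and then transferred to the infinite-volume process by the monotone limit given by the Chen (1992) construction. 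For the branching random walk with $\tilde N_0 = 1$, the identity $\E[\tilde N_t] = e^{-ct}$ is immediate from the standard first-moment computation, and the bound $\E[N_t] \leq \E[\tilde N_t]$ combined with the Markov-inequality argument above concludes the proof. This is exactly the role played by Chen's ergodicity criterion: it supplies the infinite-cluster construction on which the coupling and the first-moment inequality can be carried out rigorously.
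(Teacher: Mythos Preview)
Your approach is genuinely different from the paper's. The paper does not use a first-moment/branching comparison; it invokes Chen's ergodicity criterion directly, by building a basic coupling of the generators on nested boxes $\Lambda_n\subset\Lambda_m$ and showing that the weighted distance $q_z(\xi_1,\xi_2)=|\xi_1(z)-\xi_2(z)|\,k_z$ satisfies $\hat\Omega_{n,m}q_z\le\sum_y c_{yz}q_y+\sum_y g_{yz}q_y$ with $\sup_y\sum_z c_{yz}\le \phi-1+2d\lambda M<0$ for $M>1$ close to $1$. This gives uniqueness of the stationary distribution (namely $\delta_0$), and the argument treats IRP$(\infty)$ and CRP$(\infty)$ in parallel because both recovery mechanisms act non-positively on $q_z$.

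For IRP$(\infty)$ your argument is correct and more elementary than the paper's: with $\beta\le\lambda$, the natural coupling does give IRP$(\infty)\le$ BRW (same per-particle death rate $1$, and every IRP birth rate is bounded above by the corresponding BRW birth rate), so $\E[N_t]\le e^{-(1-\phi-2d\lambda)t}$ and extinction follows.

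For CRP$(\infty)$, however, the step ``dominate by the same branching random walk'' is wrong. Take a single site with $j\ge 2$ infected individuals and no infections. In CRP the whole cluster is still intact at time $t$ with probability $e^{-t}$, whereas in your BRW (independent rate-$1$ deaths) all $j$ particles survive only with probability $e^{-jt}$. Hence $P(N_t^{\mathrm{CRP}}=j)>P(N_t^{\mathrm{BRW}}=j)$, so $N_t^{\mathrm{CRP}}$ is \emph{not} stochastically below $N_t^{\mathrm{BRW}}$, and no coupling with $\tilde N_t\ge N_t^{\mathrm{CRP}}$ exists. The formal identity $LN(\eta)\le(\phi+2d\lambda-1)N(\eta)$ is still valid for CRP (the recovery contributes $-\sum_x 1\cdot\eta(x)=-N$), so the first-moment route can be salvaged, but not via BRW domination: you would have to justify $\frac{d}{dt}\E[N_t]=\E[LN(\eta_t)]$ directly, e.g.\ by bounding $N_t$ above by the pure-birth (no-recovery) process to get integrability, and then passing to the limit in a truncation $N\wedge M$. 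As written, the CRP half of your proof has a genuine gap; the paper's Chen-criterion argument avoids this issue because it works with the coupled distance rather than a one-sided domination, and for that quantity the cluster recovery is harmless.
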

\indent The cluster size being fixed, the analogue of Theorem \ref{ICRP-tauxfaible} for CRP($\kappa$) and IRP($\kappa$) is given by:

\begin{propo}
If \begin{equation}\label{ICRP-TF1}\phi \, + \, 2 \, d\, (\lambda\vee\beta)\, < \, 1,\end{equation}
then there is no epidemic for the IRP($\kappa$) and the CRP($\kappa$) with parameters $(\lambda,\beta,\phi)$ for all $\kappa \geq 1$. Note that this condition is uniform in the cluster size.
\end{propo}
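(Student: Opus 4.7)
The plan is to dominate the total number of infected individuals by a subcritical linear system and conclude via a first-moment argument. Introduce
$$ N_t \ := \ \sum_{x \in \Z^d} \xi_t(x), $$
the total number of infected individuals at time $t$. Since $\kappa < \infty$ and we start from a single infected individual at $0$, standard comparison with a Poisson process of births in any finite box implies that $N_t$ is almost surely finite for every $t \geq 0$.

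Next, compute the drift of $N_t$ transition by transition. The flip $0 \to 1$ at a site $x$ contributes $+1$ to $N$ at rate $\lambda \sum_{z \sim x} \xi(z)$; the flip $i \to i+1$ at $x$, for $1 \leq i \leq \kappa - 1$, contributes $+1$ at rate $\beta \sum_{z \sim x} \xi(z) + i \phi$; and the recovery contributes $-\xi(x)$ to the drift (as $-1$ at rate $\xi(x)$ for IRP and as $-\xi(x)$ at rate $1$ for CRP). Summing over $x$ and using the identity $\sum_x \sum_{z \sim x} \xi(z) = 2 d \sum_z \xi(z)$, the action of the formal generator $L$ on $N$ satisfies the pointwise bound
$$ L N (\xi) \ \leq \ (\lambda \vee \beta) \sum_x \sum_{z \sim x} \xi(z) \, + \, \phi \sum_x \xi(x) \, - \, \sum_x \xi(x) \ = \ -\gamma \, N(\xi), $$
where $\gamma \, := \, 1 - \phi - 2 d (\lambda \vee \beta) \, > \, 0$ under hypothesis \eqref{ICRP-TF1}.

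From this I would deduce, by Dynkin's formula applied to the process restricted to a large box $\Lambda_n$ and then letting $n \to \infty$, that $\E[N_t] \leq e^{-\gamma t} \E[N_0] = e^{-\gamma t}$. Since the event that an epidemic occurs is the decreasing intersection $\bigcap_{t \geq 0} \{N_t \geq 1\}$, Markov's inequality and monotone continuity of probability yield
$$ P(\textrm{epidemic}) \ = \ \lim_{t \to \infty} P(N_t \geq 1) \ \leq \ \lim_{t \to \infty} \E[N_t] \ = \ 0, $$
which is the statement for both IRP($\kappa$) and CRP($\kappa$). The uniformity in $\kappa$ is automatic because the cap $\xi(x) \leq \kappa$ only suppresses the infection rates, while the effective per-individual recovery contribution equals $-\xi(x)$ regardless of whether recoveries are individual (IRP) or by clusters (CRP).

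The main obstacle is the rigorous control of the expectation bound, since $N$ is unbounded and does not belong to the canonical core of the generator. The cleanest remedy is to bypass generator calculus altogether and produce an explicit pathwise domination: couple the process with a branching random walk in which each infected particle gives birth at rate $\phi + 2 d (\lambda \vee \beta)$ and dies at rate $1$, using the graphical representation and treating within-cluster infections as the births of new particles at the same site. By \eqref{ICRP-TF1} this branching random walk is strictly subcritical and thus goes extinct almost surely, and the epidemic of $\xi_t$ is a subset of the surviving family, which yields the conclusion without any domain issue.
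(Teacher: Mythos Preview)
Your argument is correct and considerably more elementary than the paper's. The paper does not spell out a separate proof of this proposition; it is presented as the finite-$\kappa$ analogue of Theorem~\ref{ICRP-tauxfaible}, whose proof goes through Chen's ergodicity machinery (Theorem~\ref{ICRP-Chen2}): one builds coupled generators $\hat\Omega_{n,m}$ on nested boxes, computes their action on the weighted distance $q_z(\xi_1,\xi_2)=|\xi_1(z)-\xi_2(z)|\,k_z$, and checks that the resulting matrix $(c_{xy})$ has negative row sums, yielding a Wasserstein-type contraction and hence uniqueness of the invariant measure. Your route bypasses this entirely with a one-line drift bound $LN\le -\gamma N$ and Markov's inequality. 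What the paper's approach buys is more: genuine ergodicity (convergence to $\delta_{\mathbf 0}$ from any initial condition in $E_0$), whereas yours gives exactly the statement as phrased---extinction from a single infected individual---which is all that is asked.

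One caveat on your alternative remedy~(b): the branching-random-walk domination is clean for IRP($\kappa$) but not for CRP($\kappa$). In CRP the entire cluster recovers at rate~$1$ regardless of size, so the per-particle death rate is $1/i$ rather than~$1$, and a naive coupling with a rate-$1$-death BRW can break (at rate $i-1$ a BRW particle dies while the CRP cluster of size $i$ does not, violating the ordering). This does not affect your main argument, since remedy~(a)---running the process on $\Lambda_n$, where $N$ is bounded and Dynkin applies, and then letting $n\to\infty$ by monotone convergence using attractiveness---works uniformly for both models; the contribution of recovery to $LN$ is $-\xi(x)$ in either case, which is the only fact you need.
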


\indent The ergodicity criterion established in Theorem \ref{ICRP-tauxfaible} shows that when \eqref{ICRP-TF} holds both IRP($\infty$) and CRP($\infty$) converge to the ``all 0'' configuration. 
Moreover, by using basic coupling, Theorem 1, Schinazi (2002) and Theorem 3, Belhadji and lanchier (2006) we prove that when $\phi$ is large enough an epidemic may occur for the IRP($\infty$) and CRP($\infty$):

\begin{theor}\label{IR-CR-phi}
For all $\kappa\geq 2$, $\phi \geq 0$ and $\beta\geq 0$, if  $\lambda > \lambda_c$ an epidemic may occur for the IRP($\infty$) and CRP($\infty$).
For all $\lambda$ and $\beta$ with $\beta \leq \lambda  <  1/ 2d $, 
there exists $\phi_c (\lambda,\beta) \in (0, \infty)$ such that: 
if $\phi < \phi_c (\lambda,\beta)$ there is no epidemic while an epidemic may occur for both processes
if $\phi > \phi_c (\lambda,\beta)$.
\end{theor}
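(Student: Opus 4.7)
The theorem contains two independent assertions and I would treat them in turn.

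\emph{Part~1 ($\lambda>\lambda_c$).} The plan is a basic coupling of IRP($\infty$), respectively CRP($\infty$), with the rate-$\lambda$ basic contact process $\chi_t$, arranged so that $\{\chi_t(x)=1\}\subseteq\{\xi_t(x)\geq 1\}$ for every $x$ and $t$. Starting from the common initial datum with a single infected individual at the origin, one checks two rate inequalities: the $0\to 1$ rate at a healthy site in our process is at least the contact-process rate because each infected neighbour contributes $\xi(z)\geq 1$ against the indicator $\mathbf{1}\{\chi(z)=1\}$; and the rate at which an infected site returns to the healthy state is at most $1$, matching the contact-process death rate --- for the CRP this is immediate, and for the IRP the cluster must first empty to $\xi=0$. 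Using the graphical representation one realises the coupling explicitly by attaching the contact-process death at $x$ to the mark $V_n\super{x,1}$ and each contact infection from $x$ to $z$ to the mark $T_n\super{x,z,1}$; higher-level marks only increase $\xi$ or keep it positive, so the inclusion is preserved for all time. Since the contact process survives with positive probability when $\lambda>\lambda_c$, an epidemic may occur in both models.

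\emph{Part~2 (existence of $\phi_c\in(0,\infty)$).} Monotonicity in $\phi$, noted just after the definition of an epidemic, implies that the quantity $\phi_c(\lambda,\beta):=\inf\{\phi\geq 0:\textrm{an epidemic may occur}\}\in[0,\infty]$ already separates the two regimes, so it suffices to prove $\phi_c(\lambda,\beta)\in(0,\infty)$. The lower bound is read off Theorem~\ref{ICRP-tauxfaible}: the condition $\phi+2d\lambda<1$ excludes an epidemic, hence $\phi_c(\lambda,\beta)\geq 1-2d\lambda>0$ under the hypothesis $2d\lambda<1$. For the upper bound I would compare our $(\lambda,\beta,\phi)$ process with the original model of Schinazi (2002), respectively Belhadji and Lanchier (2006), in which the outside infection of an already-infected cluster is absent ($\beta=0$). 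Removing those transitions only weakens the infection, so basic coupling yields a pathwise domination of the $(\lambda,0,\phi)$ process by the $(\lambda,\beta,\phi)$ one. Theorem~1 of Schinazi (2002) and Theorem~3 of Belhadji and Lanchier (2006) then furnish a finite $\phi_0(\lambda)$ beyond which an epidemic occurs in the $\beta=0$ process, and the domination transfers this conclusion to our process, so $\phi_c(\lambda,\beta)\leq\phi_0(\lambda)<\infty$.

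\emph{Main obstacle.} The delicate point is that the theorems of Schinazi (2002) and Belhadji and Lanchier (2006) invoked above are stated for a \emph{finite} cluster size, whereas here we deal with $\kappa=\infty$. I would bridge this gap by yet another basic coupling: truncating the state space at $\kappa$ only removes upward transitions, so CRP($\kappa$) is pathwise dominated by CRP($\infty$), and likewise IRP($\kappa$) by IRP($\infty$). Consequently, the epidemic produced at large $\phi$ in the finite-cluster regime by the block-construction arguments of those earlier papers transfers immediately to the infinite-cluster setting, closing the argument.
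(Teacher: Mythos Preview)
Your proposal is correct and follows essentially the same route as the paper: domination of the basic contact process for Part~1, Theorem~\ref{ICRP-tauxfaible} for the lower bound $\phi_c>0$, and the chain of couplings ``reduce to $\beta=0$, then to finite $\kappa$, then invoke Schinazi (2002) / Belhadji--Lanchier (2006)'' for the upper bound $\phi_c<\infty$. The only detail you leave implicit that the paper spells out is that, for the CRP, Schinazi's theorem requires $\kappa\lambda>\lambda_c$, so one must first choose $\kappa$ large enough (possible since $\lambda>0$) before invoking the finite-cluster result.
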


\noindent As a consequence of ergodicity criterion \eqref{ICRP-TF1} and by analyzing the behavior of the processes IRP($\kappa$) and CRP($\kappa$) in the limiting case $\phi=\infty$ , the analogues of Theorem \ref{IR-CR-phi}  is given respectively by: 
\begin{propo} For all $\kappa\geq 2$, $\phi \geq 0$ and $\beta\geq 0$, if  $\lambda > \lambda_c$ an epidemic may occur for the IRP($\kappa$). 
For all $\kappa\geq 2$, $\lambda$ and $\beta$ with $\lambda\vee \beta  <  1/ 2d $, 
there exists $\phi_c (\lambda,\beta) \in (0, \infty)$ such that 
if $\phi < \phi_c (\lambda,\beta, \kappa)$ there is no epidemic,
while if $\phi > \phi_c (\lambda,\beta, \kappa)$ an epidemic may occur for IRP($\kappa$).\end{propo}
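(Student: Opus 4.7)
The plan is to establish the two assertions separately, reusing tools already developed earlier in the paper. For the first assertion, setting $\phi = \beta = 0$ in the IRP$(\kappa)$ dynamics \eqref{ICRP-a}--\eqref{ICRP-c} turns it into the basic contact process with infection rate $\lambda$: starting from a $\{0,1\}$-valued initial configuration, no cluster can ever reach level $2$, so $\xi_t$ evolves exactly as the contact process on $\Z^d$. Monotonicity of the epidemic probability in the parameters $\phi$ and $\beta$, established via basic coupling earlier in the paper, then shows that the general IRP$(\kappa)$ with parameters $(\lambda,\beta,\phi)$ pointwise dominates this contact process, and hence survives as soon as $\lambda > \lambda_c$.

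For the second assertion, basic coupling in $\phi$ implies that $\{\phi \geq 0 : \text{an epidemic may occur}\}$ is an upper half-line, so it suffices to verify that its infimum $\phi_c(\lambda,\beta,\kappa)$ lies strictly between $0$ and $\infty$. The lower bound $\phi_c > 0$ is immediate from the preceding Proposition: under the standing assumption $\lambda \vee \beta < 1/(2d)$, every $\phi < 1 - 2d(\lambda \vee \beta)$ satisfies the criterion \eqref{ICRP-TF1} and hence produces no epidemic, so $\phi_c \geq 1 - 2d(\lambda \vee \beta) > 0$.

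For the upper bound $\phi_c < \infty$, the idea is to analyze the limiting regime $\phi \to \infty$. When $\phi$ is very large, any cluster containing at least one infected individual saturates almost immediately at level $\kappa$, since all within-cluster birth rates $i\phi$ dominate the corresponding recovery rates $i$; moreover, the first-passage time of the associated birth--death chain from $\kappa$ down to $0$ diverges with $\phi$, at polynomial order $\phi^{\kappa-1}$. Throughout this long infected lifetime the cluster transmits infection to every healthy neighbor at total rate at least $\kappa\lambda > 0$. A block/renormalization argument, in the spirit of Schinazi (2002) and of the proof of Theorem~\ref{IR-CR-phi}, then yields a stochastic domination of the indicator field $(\ind_{\xi_t(x)\neq 0})_{x \in \Z^d}$ by a supercritical oriented percolation (equivalently, a supercritical contact process) on $\Z^d$, so that an epidemic may occur for all $\phi$ large enough.

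The main obstacle is precisely this last step. One must quantify the tail of the first-passage time from the saturated state $\kappa$ to $0$, show that each healthy neighbor saturates within this time window with probability close to $1$, and combine both estimates into a well-chosen block event whose success probability can be driven above the relevant oriented-percolation threshold by taking $\phi$ sufficiently large. The finiteness of $\kappa$ is essential here, since it guarantees a deterministic, $\phi$-dependent lower bound on the saturated lifetime; the remaining accounting is routine but technical, and is the only genuinely delicate part of the argument.
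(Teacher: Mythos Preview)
Your proposal is correct and follows essentially the same route as the paper. The paper does not give a detailed proof of this proposition but indicates that it follows from the ergodicity criterion \eqref{ICRP-TF1} (your lower bound $\phi_c>0$) together with an analysis of the limiting case $\phi=\infty$ (your upper bound $\phi_c<\infty$); for the latter the paper simply invokes Theorem~3 of Belhadji and Lanchier (2006), whose proof is precisely the saturate-at-$\kappa$/block-comparison argument you sketch. Your treatment of the first assertion via coupling with the basic contact process also matches the paper's argument for Theorem~\ref{IR-CR-phi}.
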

\begin{propo} For all $\kappa \geq 1$, $\phi \geq 0$ and $\beta\geq 0$, if $\kappa\, \lambda \leq \lambda_c$ there is no epidemic  while if  $\lambda > \lambda_c$ an epidemic may occur for the CRP($\kappa$). 
For all $\kappa \geq 2$, $\lambda > \kappa\lambda_c$ and $\lambda\vee \beta  <  1/ 2d $ there is $\phi_c (\lambda, \beta, \kappa) \in (0, \infty)$ such that if $\phi < \phi_c (\lambda,\beta, \kappa)$ there is no epidemic for the CRP($\kappa$) while if $\phi > \phi_c (\lambda, \beta, \kappa)$ an epidemic may occur.\end{propo}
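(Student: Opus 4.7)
My plan is to split the statement into three assertions and reduce each to a comparison with the basic contact process.

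For the subcritical part of the first claim, namely that $\kappa\lambda\leq\lambda_c$ rules out an epidemic, I would work with the projected infected-cluster process $\zeta_t(x)=\ind\{\eta_t(x)\geq 1\}$. By the CRP dynamics, $\zeta_t(x)$ flips from $0$ to $1$ at rate $\lambda\sum_{z\sim x}\eta_t(z)\leq \kappa\lambda\sum_{z\sim x}\zeta_t(z)$, using $\eta_t\leq\kappa$, and from $1$ to $0$ at rate exactly $1$, because the whole cluster recovers simultaneously. A basic coupling in the graphical representation then shows that $\zeta_t$ is stochastically dominated by the basic contact process with infection rate $\kappa\lambda$ and unit recovery rate, which dies out when $\kappa\lambda\leq\lambda_c$. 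Conversely, if $\lambda>\lambda_c$, any infected cluster contains at least one infected individual, so it infects each healthy neighbor at rate at least $\lambda$ while recovering at rate $1$; the corresponding coupling gives that $\zeta_t$ dominates a basic contact process with infection rate $\lambda$, which survives with positive probability, hence an epidemic may occur.

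For the second claim, under the hypotheses $\kappa\lambda>\lambda_c$ and $\lambda\vee\beta<1/2d$, basic coupling shows that the epidemic probability is nondecreasing in $\phi$, so the threshold $\phi_c(\lambda,\beta,\kappa)=\inf\{\phi\geq 0 : \text{an epidemic may occur}\}$ is well defined in $[0,\infty]$. The lower bound $\phi_c>0$ is immediate from the earlier criterion $\phi+2d(\lambda\vee\beta)<1$: since $1-2d(\lambda\vee\beta)>0$, any $\phi$ below this quantity forces ergodicity and hence no epidemic. For the upper bound $\phi_c<\infty$ I would compare the CRP with its formal $\phi=\infty$ limit: as $\phi\to\infty$ the internal birth rates $i\phi$ push each infected cluster instantly to saturation level $\kappa$ while the cluster-wide recovery rate remains $1$, so the projected process $\zeta^\infty_t=\ind\{\eta_t=\kappa\}$ is exactly the basic contact process with infection rate $\kappa\lambda>\lambda_c$, which survives with positive probability. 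A block/renormalization argument then transfers survival to the CRP at finite but large $\phi$.

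The main obstacle will be this last passage to finite $\phi$. Because the recovery rate is not scaled together with $\phi$, clusters do not literally saturate at $\kappa$ but fluctuate on the pure birth chain with birth rates $i\phi$ between cluster-wide recoveries. Starting from a single infected individual, the time to reach level $\kappa$ is $\sum_{i=1}^{\kappa-1}1/(i\phi)=O(\log\kappa/\phi)$, so for large $\phi$ a cluster spends almost the entirety of each infection period at level $\kappa$. The clean way to exploit this is to fix a large space-time box, use the preceding estimate to show that with probability tending to $1$ as $\phi\to\infty$ every infected cluster in the box infects each healthy neighbor at an effective rate arbitrarily close to $\kappa\lambda$, and then feed this into a Bezuidenhout--Grimmett type oriented-percolation comparison to deduce survival of the CRP for all sufficiently large $\phi$, yielding $\phi_c<\infty$.
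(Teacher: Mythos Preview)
Your proposal is correct and follows essentially the same route as the paper: the projected process $\zeta_t=\ind\{\eta_t\geq 1\}$ is compared with the basic contact process of rate $\kappa\lambda$ (from above) and rate $\lambda$ (from below) for the first assertion, the ergodicity criterion $\phi+2d(\lambda\vee\beta)<1$ gives $\phi_c>0$, and the $\phi=\infty$ limit, identified with a rate-$\kappa\lambda$ contact process, gives $\phi_c<\infty$. The only difference is cosmetic: rather than carrying out the Bezuidenhout--Grimmett block construction you sketch, the paper simply invokes Theorem~1 of Schinazi (2002), which is exactly that survival result for the CRP$(\kappa)$ with $\beta=0$, and then uses monotonicity in $\beta$ (Lemma~\ref{ICRP-monotonicity1}) to cover the general case.
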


\noindent{\slshape{Hydrodynamic limits for a two-species IRP with infinite cluster size}} \\ \vspace{-7pt}

\indent In the previous models,  only the population of infected individuals is taken  into account; we consider now a more general Markov process, without any restrictions on the clusters sizes, and with two types of particles, healthy and infected individuals. This epidemic model is a continuous-time Markov process $(\eta_t,\xi_t)_{t\geq 0}$ in which the state at time $t$ is a function $(\eta_t,\xi_t): \Z\rightarrow \N \times \N$,  where $\eta_t(x)$ and $\xi_t(x)$ are the respective numbers of healthy and infected individuals at site $x$ and at time $t$. The dynamics splits into two parts: diffusion and reaction. 
The diffusion represents the migration of individuals (infected or healthy) speeded up by a renormalizing  parameter $N^2$, 
it consists in independent symmetric random walks with nearest neighbor jumps, accelerated by  $N^2$. There is an interaction between healthy and infected individuals in
the reaction part, which describes births, deaths, recoveries and infections of individuals. 

\indent Our aim is to determine the limiting behavior of scaling processes as $N$ goes to infinity, in others words we will  prove  hydrodynamic limits for this epidemic model. The strategy consists first in restricting the study to the torus then by coupling method to extend the result to all space. 

To describe the evolution rules of the process,  we set   
$$ \eta^{x, +} (z) \ = \ \left\{\begin{array}{ll} \vspace{2pt}
 \eta (z) + 1 & \ \hbox{if} \ z = x, \\
 \eta (z)     & \ \hbox{if} \ z \neq x, \end{array} \right.
 \quad \hbox{and if}\ \eta(z)>0, \quad
 \eta^{x, -} (z) \ = \ \left\{\begin{array}{ll} \vspace{2pt}
 \eta (z) - 1 & \ \hbox{if} \ z = x, \\
 \eta (z) & \ \hbox{if} \ z \neq x,
  \end{array} \right. $$
and 
$$ \eta^{x, y} (z) \ = \ \left\{\begin{array}{ll} \vspace{2pt}
 \eta (x) - 1 & \ \hbox{if} \ z = x, \\ \vspace{2pt}
 \eta (y) + 1 & \ \hbox{if} \ z = y, \\
 \eta (z) & \ \hbox{otherwise.} \end{array} \right. $$
In other words,  $\eta^{x,+}$  (respectively, $\eta^{x,-}$) is the configuration obtained from $\eta$ by adding a particle at site $x$ (respectively, removing a particle at site $x$ if there is at least one). The configuration $\eta^{x, y}$ is obtained from $\eta$ by letting one particle 
jump from $x$ to $y$.  The formal infinitesimal generator is given for a cylinder function $f$  by
 \begin{equation}\label{hydro-process} 
\Omega f(\eta, \,\xi) \ =  \ \Omega\super {\cal R} f (\eta, \,\xi)\ +  \ N^2 \ \Omega\super {\cal  D} \, 
f (\eta, \,\xi)\end{equation}

where $\Omega\super {\cal D} \ = \  \Omega\super{{\cal D},1} \, + \,  \Omega\super{{\cal D},2}$, and $\Omega\super{{\cal D},1}$ (respectively, $\Omega\super{{\cal D},2}$ ) describes the migration of healthy (respectively, infected) individuals,
\begin{eqnarray}
\label{hydro-Hdeplacement}
\Omega\super {{\cal D},1}\, f (\eta, \,\xi) & = &  \displaystyle  \sum_{x, y\in \Z} \  
\displaystyle  \eta(x) \, p(x,y)\,  
\Big [f (\eta^{x, y}, \,\xi) - f (\eta, \,\xi) \Big]\\ \vspace{3pt}
\label{hydro-Ideplacement}\Omega\super {{\cal D},2}\,  f (\eta, \,\xi) & = &  \displaystyle   \sum_{x , y\in \Z} \  
\xi(x)  \, p(x,y)\,
\Big [f (\eta, \,\xi^{x, y}) - f (\eta, \,\xi)\Big],
\end{eqnarray}
$p(x,y)$ is a transition probability on the lattice $\Z$ such that a jump from site $x$ 
to site $y$ is allowed if and only if $x$ and $y$ are neighbors, given by 
$p(x,y)= {1\over 2}\ind_{\{|x-y|=1\}}$, and   

\begin{eqnarray*} \vspace{3pt}
\Omega\super {\cal R} f (\eta, \,\xi)
 & = & \displaystyle \sum_{x \in \Z} \ {\beta}_1 (\eta (x), \,\xi (x)) \Big [f (\eta^{x, +}, \,\xi)-
 f(\eta, \,\xi)\Big] \ + \ \delta_1 (\eta (x), \,\xi (x)) 
 \Big[f (\eta^{x, -}, \,\xi) - f(\eta, \,\xi)\Big] \\ \vspace{3pt} 
& + & \displaystyle \sum_{x \in \Z} \ {\beta}_2 (\eta(x), \,\xi(x)) \Big [f (\eta, \,\xi^{x, +}) - f (\eta, \,\xi)\Big ]  \ + \    \delta_2 (\eta (x), \,\xi (x)) 
\Big [f (\eta, \,\xi^{x, -}) - f (\eta, \,\xi)\Big]  \\ \vspace{3pt}
 & + & \displaystyle \sum_{x \in \Z} \ \xi (x) \Big [f (\eta^{x, +}, \,\xi^{x, -}) - 
f (\eta, \,\xi)\Big] \ + \ \ind_{\{\eta (x) > 0 \}} \ \phi \ \xi (x) \
  \Big [f (\eta^{x, -}, \,\xi^{x, +}) - f (\eta, \,\xi) \Big ]  \\ \vspace{3pt}
& + & \displaystyle \sum_{x \in \Z} \ \ind_{\{ \eta (x) > 0,\, \xi (x) = 0 \}} \Big (\lambda \ 
\sum_{y\sim x}  \xi (y)\Big) \ \Big [f (\eta^{x, -}, \,\xi^{x, +}) - f (\eta, \,\xi)\Big]\\ \vspace{3pt}
& + & \displaystyle \sum_{x \in \Z} \ \ind_{\{ \eta (x) > 0,\, \xi (x) > 0 \}} \Big (\beta \ 
\sum_{y\sim x}  \xi (y)\Big) \ \Big [f (\eta^{x, -}, \,\xi^{x, +}) - f (\eta, \,\xi)\Big], 
\end{eqnarray*}

where
\begin{equation}\label{Taux}\begin{array}{lll} \vspace{5pt}{\beta}_1(\eta(x), \xi(x))\ = \ \alpha_1\, (\eta(x)+\xi(x)),\  &\quad & \delta_1(\eta(x),\xi(x))= \kappa\, \eta(x)^2\, (\eta(x)+\xi(x)^2)
\\ \vspace{5pt}
{\beta}_2(\eta(x), \xi(x))\ = \ \alpha_2\, (\eta(x)+\xi(x)), \ &\quad& \delta_2(\eta(x),\xi(x))= \kappa\, \xi(x)^2\, (\eta(x)^2 +\xi(x)),\end{array}\end{equation}

\noindent and $\alpha_1,\alpha_2$ and $\kappa$ are positive coefficients. 
In other words, healthy (respectively, infected) individuals die at rate $\delta_1 (\eta, \xi)$
(respectively, $\delta_2 (\eta, \xi)$) and are born at rate ${\beta}_1(\eta, \xi)$ (respectively, $\beta_2(\eta, \xi)$);  a healthy cluster at site $x$ gets infected, that is the state of $x$ flips from 0 to 1, at rate $\lambda$ times the number of infected individuals present in the neighboring clusters. 
If there are $i \geq 1$ infected individuals in the cluster, 
then each of these individuals infects healthy individuals in the cluster at 
rate $\phi$; finally, each infected individual recovers at rate $1$ regardless of the number of infected individuals in its cluster.

\indent  Theorems 13.8 and 13.18 in Chen 1992, enable to establish  sufficient conditions for existence and uniqueness of the process $(\eta_t,\xi_t)_{t\in\R^+}$ whose evolution is described by the formal generator $\Omega$ in \eqref{hydro-process}.  We show that  conditions called  the first moment condition,  Lipschitz conditions, growing condition and moment condition are satisfied for the process. 

\indent  We first assume that healthy and infected individuals live on the space
$$\{x/N, \ x \in  \T_N\} $$
where $\T_N$ is the discrete torus  $\T_N = {\Z} / N {\Z}$ (i.e. sites $0$ and $N-1$ are neighbors). We make the distance between two neighboring sites converging to zero by letting $N$ goes to infinity. The evolution of the process is described by the generator 
\begin{equation}\label{hydro-OmegaN} \Omega_N \ =  \ \Omega\super {\cal R}_N \ +  \ N^2 \ 
\Omega\super {\cal  D}_N,\end{equation} 
where  $\Omega\super {\cal R}_N$ and  $\Omega\super {\cal  D}_N $ 
are the restrictions of $\Omega\super {\cal R}$ and  $\Omega\super {\cal  D}$  to $\T_N$. 
Let $\mu^N$ be the initial distribution of the process on $\N^{\T_N}\times \N^{\T_N}$ and  $S^N_t$ be the  semi-group associated to the generator $\Omega_N$. Using the relative entropy method (See Kipnis and Landim, 1999)  we will prove that the empirical measure $(\pi_t^N (\eta_t), \,\pi_t^N (\xi_t))$, defined by
\begin{equation}\label{mesures} 
\pi^N_t (\eta_t) \ = \ \displaystyle  {1 \over N} \ \sum_{x = 0}^{N - 1} \ \eta_t (x) \,\delta_{x / N}, \qquad
\pi^N_t (\xi_t) \ = \  \displaystyle {1 \over N} \ \sum_{x = 0}^{N - 1} \ \xi_t (x) \,\delta_{x / N}, 
\end{equation}
where $\delta_{x / N}$ is the Dirac measure at $x / N$,  converges in probability, on 
$D([0,T],M_+(\T)\times M_+(\T))$ (the space of right continuous functions with left limits taking values in $M_+(\T)\times M_+(\T)$ with $M_+(\T)$ is the space of finite positive measures on the torus $\T=[0,1)$ endowed with the weak topology), as $N$ goes to infinity, to a deterministic measure, absolutely continuous with respect to the Lebesgue measure, $(\rho_1 (t, u) \,du, \,\rho_2 (t, u) \,du)$,
with density  $(\rho_1 (\cdot, \cdot), \,\rho_2 (\cdot, \cdot))$  solution of the reaction-diffusion 
system \eqref{hydro-PDE}. The strategy consists in studying the entropy of the process with  respect 
to Poisson measures with parameter the expected ``good profile" 
$(\rho_1 (\cdot, \cdot), \,\rho_2 (\cdot, \cdot))$.

\indent For a density profile $\rho_1 (\cdot) \times \rho_2 (\cdot)$, on  $\T\times \T$,  we denote  by $\nu^N_{\rho_1 (\cdot)} \times \nu^N_{\rho_2 (\cdot)}$ the product of Poisson measures such that, for all $x \in \T_N$ and $k, j \in \N$,
$$ \begin{array}{rcl} \vspace{10pt}
\Big (\nu_{\rho_1 (\cdot)}^N \,\times \,\nu_{\rho_2 (\cdot)}^N\Big)\, \left \{ (\eta, \xi), \,\eta(x) = k, \,\xi (x) = j \right  \} & = &
\displaystyle \frac{(\rho_1 (x / N))^k}{k!} \ \exp \,(- \rho_1 (x / N)) \\
&  & \times \ \displaystyle \frac{(\rho_2 (x / N))^j}{j!} \ \exp \,(- \rho_2 (x / N)).
\end{array} $$
The family of  measures $(\nu_{\rho}^N\times\nu_{\rho}^N)$ with constant parameter $\rho>0$ is 
invariant for the independent 
random walks which govern the migration of individuals, this is 
 why we study the entropy variation with respect to these reference measures.

\indent We define the entropy of  $\mu^N$ on $\N^{\T_N}\times \N^{\T_N}$ with respect 
to $(\rho^1(.),\rho^2(.))$ by 
\begin{equation}\label{entro1}
\Hh\, \left[ \mu^N|  \nu^N_{\rho_1(.)}\times \nu^N_{\rho_2(.)}\right ] = \int \log \left ( {d\,\mu^N \over d \, (\nu^N_{\rho_1(.)}\times \nu^N_{\rho_2(.) })}\right)\, d\, \mu^N(\eta,\xi).
\end{equation}

\indent For a cylinder function $h$ on $\N^{\T_N}\times \N^{\T_N}$
\begin{equation}\label{hydro-tilde} 
{\widetilde h} (a,b) \ = \ \int h (\eta, \,\xi) \ d(\nu_a^N \times\nu_b^N) (\eta,\xi).
\end{equation}

\begin{theor}\label{hydro-hth}
Assume that there exists smooth positive functions $m_1 (\cdot)$ and $m_2 (\cdot)$, defined on the 
torus $\T$, such that
\begin{equation}
\label{ci}
\limsup_{N \to \infty} \  \displaystyle {1 \over N} \, \Hh \, \left[\,\mu^N \,| \,\nu_{m_1 (\cdot)}^N \,\times 
\,\nu_{m_2 (\cdot)}^N \right] \ = \ 0.
\end{equation}
 Then for all functions $G_1 (\cdot)$ and $G_2 (\cdot)$ continuous on $\T$, $\delta > 0$ and 
 $t \in [0, T]$, we have
 $$ \displaystyle \lim_{N \to \infty} \ \mu^N S^N_t \
 \left\{(\eta, \,\xi) : \left | \displaystyle  {1 \over N} \ \displaystyle \sum_{x=0}^{N-1} \ \eta(x) \,G_1 (x / N) -
 \int_{0}^1 G_1 (\theta) \,\lambda_1 (t, \,\theta) \,d\theta \,\right| > \,\delta \right. $$
 $$\hbox{and} \left. \quad \left|\,\displaystyle \displaystyle  {1 \over N}\, \sum_{x=0}^{N-1} \ \xi(x) \,G_2 (x / N) -
 \int_0^1 G_2 (\theta) \,\lambda_2 (t, \theta) \,d\theta \,\right| > \,\delta \,\right\} \ = \ 0 $$
where $(\lambda_1 (t, \cdot), \lambda_2 (t, \cdot))$ is the unique smooth solution of the system:
 \begin{equation}
 \label{hydro-PDE} \quad \partial_t
 \left(\begin{matrix}\vspace{8pt}
 \lambda_1  \\  \vspace{8pt} \lambda_2
 \end{matrix} \right)
 \ = \  \displaystyle {1 \over 2} \ \Delta
 \left(\begin{matrix} \vspace{8pt}
 \lambda_1  \\ \vspace{8pt}
 \lambda_2
 \end{matrix} \right) \ + \
 \left(\begin{matrix}\vspace{8pt}
 {\widetilde {\beta}}_1 (\lambda_1, \,\lambda_2) - {\widetilde \delta}_1 (\lambda_1, \,\lambda_2)
 +{\widetilde g} (\lambda_1, \,\lambda_2) 
  \\ \vspace{8pt}
  {\widetilde {\beta}}_2 (\lambda_1, \,\lambda_2) - {\widetilde \delta}_2 (\lambda_1, \,\lambda_2)-
  {\widetilde g} (\lambda_1, \,\lambda_2) 
 \end{matrix} \right), 
\end{equation}
with initial conditions $\lambda_1 (0, \,d\theta) = m_1 (\theta)$, and $\lambda_2(0, \,d\theta) = m_2 (\theta)$; and $g$ is a function on $\N\times \N$ defined by 
\begin{equation}\label{hydro-function}
g(\eta(z),\xi(z))= \Big (1- \phi\, \ind_{\{\eta(z) > 0\}}\Big )\, 
\xi(z) - \ind_{\{\eta(z)>0\}}\Big (\lambda \, \ind_{\{\xi(z)=0\}} + \beta \, \ind_{\{\xi(z)>0\}}\Big) \displaystyle \sum_{y\sim z} \xi(y).
\end{equation}
\end{theor}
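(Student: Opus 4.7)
The plan is to use Yau's relative entropy method, adapted for two-species reaction--diffusion systems with unbounded rates by Mourragui (1996) and Perrut (2000). The reference measure at time $t$ is the inhomogeneous product Poisson measure $\nu^N_{\lambda_1(t,\cdot)}\times\nu^N_{\lambda_2(t,\cdot)}$, where $(\lambda_1,\lambda_2)$ is the smooth non-negative solution of \eqref{hydro-PDE} with initial condition $(m_1,m_2)$. Existence of this solution globally on $[0,T]\times\T$ is obtained by standard parabolic theory (local solution, invariant-region argument on the positive quadrant, bootstrap regularity), using that $\widetilde\beta_i$, $\widetilde\delta_i$ and $\widetilde g$ are polynomials in $(\lambda_1,\lambda_2)$ coming from Poisson expectations of \eqref{Taux} and \eqref{hydro-function}. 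The theorem then follows once we prove
\[
\lim_{N\to\infty}\frac{1}{N}\,\Hh\!\left[\,\mu^N S^N_t\,\Big|\,\nu^N_{\lambda_1(t,\cdot)}\times\nu^N_{\lambda_2(t,\cdot)}\,\right]\;=\;0
\]
uniformly on $[0,T]$, because the classical entropy inequality applied to the cylinder functions $\sum_x G_i(x/N)\eta(x)$ and $\sum_x G_i(x/N)\xi(x)$ transforms this into the stated convergence in probability of the empirical measures.

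Differentiating $H_N(t)$ and writing $\psi^N_t$ for the density of the reference measure with respect to a fixed Poisson product measure, Yau's computation gives
\[
\frac{d}{dt}H_N(t)\;\leq\;\int\!\left(\frac{1}{\psi^N_t}\,\Omega_N^{*}\psi^N_t\;-\;\partial_t\log\psi^N_t\right)dP^N_t,
\]
with $P^N_t=\mu^N S^N_t$. The diffusive part $N^2\Omega_N^{{\cal D}}$ contributes a discrete Laplacian applied to $\log\lambda_i(t,\cdot)$ evaluated on the configuration; a two-step Taylor expansion of $\lambda_i(t,y/N)-\lambda_i(t,x/N)$, combined with $\partial_t\lambda_i=\tfrac12\Delta\lambda_i+(\text{reaction})$, produces the cancellation typical of the method, leaving an error of order $N^{-1}$ per site. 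The reaction part $\Omega_N^{{\cal R}}$ produces local functions of $(\eta(x),\xi(x))$ and their neighbors whose Poisson expectations at the profile $(\lambda_1(t,x/N),\lambda_2(t,x/N))$ are exactly $\widetilde\beta_i-\widetilde\delta_i\pm\widetilde g$; this is precisely why these particular averages were placed on the right-hand side of \eqref{hydro-PDE}.

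The only non-local piece of $\Omega_N^{{\cal R}}$ comes from $g$ through the sums $\sum_{y\sim x}\xi(y)$; these are replaced by mesoscopic empirical averages via a one-block / two-block argument, and then by their equilibrium values using local equivalence of ensembles in entropy. The indicators $\ind_{\{\eta(x)>0\}}$, $\ind_{\{\xi(x)=0\}}$, $\ind_{\{\xi(x)>0\}}$ create no extra difficulty since they are bounded and depend on a single site; the replacement computation can then be taken from Perrut (2000) essentially verbatim. After the replacement, an absorption of the remaining $H_N(t)$-term and Gronwall's inequality yield $H_N(t)/N\to0$ uniformly on $[0,T]$, and the extension from $\T_N$ to $\Z$ follows by the coupling argument indicated before the statement. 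The true obstacle, in my view, is the polynomial growth of the death rates $\delta_1(\eta,\xi)=\kappa\eta^2(\eta+\xi^2)$ and $\delta_2(\eta,\xi)=\kappa\xi^2(\eta^2+\xi)$: every step above requires uniform-in-$N$ moment bounds of arbitrary order on $\eta_t(x)$ and $\xi_t(x)$, which must be extracted from the Lipschitz, growth and moment conditions of Chen (1992) mentioned just after \eqref{Taux}, and controlling the entropy production together with the replacement errors for such unbounded rates is where the bulk of the technical work will lie.
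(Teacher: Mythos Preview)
Your outline is the same relative entropy strategy the paper uses, and the skeleton (differentiate $H_N(t)$, split diffusion/reaction, Taylor-expand the discrete Laplacian, cancel against $\partial_t\lambda_i$, one-block estimate, Gronwall) matches Section~4.2 almost step for step. Two points of divergence are worth flagging.

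First, on the unbounded rates. You locate the difficulty correctly but propose the wrong cure: you suggest extracting uniform moment bounds of arbitrary order from Chen's existence theory. The paper does not do this, and such bounds would be hard to get uniformly in $N$ under the diffusive speed-up. Instead the paper \emph{truncates} every reaction term by the indicator $\ind_{\{\eta(i)\leq M,\ \xi(i)\leq M\}}$, collects all the remainders into a single error $F(M,N,T)$, and kills this error by a martingale argument (Lemma~4.2): since $\delta_k$ grows faster than $\beta_k$ and $g$, any function $\varphi$ with $\varphi/\delta_k\to 0$ satisfies
\[
\limsup_{M\to\infty}\limsup_{N\to\infty}\frac{1}{N}\sum_x\int_0^T\!\!\int \varphi(\eta(x),\xi(x))\,\ind_{\{\eta(x)>M\}}\,d\mu^N_s\,ds\;=\;0,
\]
the proof using only that $\sum_x\eta_t(x)-\sum_x\eta_0(x)+\int_0^t\sum_x[\delta_1-\beta_1-g]\,ds$ is a centered martingale and $\mu^N(\eta(x))$ is bounded. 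This is the mechanism that replaces your ``moment bounds of arbitrary order'', and it is the only place the specific polynomial form of $\delta_1,\delta_2$ in \eqref{Taux} is really used.

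Second, after truncation the paper does \emph{not} run a one-block/two-block replacement in the usual sense. It uses the $\Gamma$-decomposition of Perrut (Lemma~4.3 here): each surviving local term is rewritten as a second-order Taylor remainder $\Gamma h(\eta^k(i),\xi^k(i),\lambda_1,\lambda_2)$, and Perrut's lemma bounds its $\mu^N_s$-integral by $\frac{1}{\gamma N}H_N(s)+R_N(k,\gamma)$ with $R_N\to 0$. The one-block estimate (Proposition~4.1) enters only to pass from $h(\eta(i),\xi(i))$ to $\widetilde h(\eta^k(i),\xi^k(i))$; no two-block step is needed. The nearest-neighbour sum $\sum_{y\sim x}\xi(y)$ in $g$ causes no extra work because $\lambda_2(t,y/N)=\lambda_2(t,x/N)+O(1/N)$, so it fits into the same local scheme.
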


\noindent{\slshape{Extension to infinite volume}} \\ \vspace{-7pt}

\indent By a coupling method, we will extend Theorem \ref{hydro-hth} to infinite volume. We will prove that two processes, one defined on $\Z$ and the other one on $\T_{C\, N}=\{-C\, N, \ldots, C\, N\}$, are ``close" when $C$ is large. Following Landim and Yau (1995) we define the specific entropy of a measure $\mu$ with respect to a measure $\nu$ on $\Z$   
\begin{equation}\label{hydro-newentropy} 
{\cal H}_N \left[\,\mu \,| \,\nu \right]\ = \ {1\over N} \sum_{n\geq 1} \Hh\left [\mu^n| \nu^n \right ]\, e^{-\theta\,n /N},
\end{equation}
where $\theta>0$ is fixed  and $\mu^n$ and $\nu^n$ are the respective restrictions of $\mu$ and $\nu$ to $\Lambda_n=\{-n,\ldots, n\}$.  Let $\tilde {S}^N_t$ be the semi-group associated to the generator 
$\Omega$ of the process $(\eta_t,\xi_t)_{t\geq 0}$ given in \eqref{hydro-process}.
\begin{theor}\label{hydro-hthiv}
We consider a sequence of initial distributions $(\mu^N)_{N\in\N}$ on $\Z^{\N}\times \Z^{\N}$ such that there exists $M>0$ with $\mu^N(\eta(x)+\xi(x))\leq M$ for all $x\in \Z$, and  smooth positive functions 
$m_1 (\cdot)$ and $m_2 (\cdot)$, defined on $\R$, satisfying
\begin{equation}
\label{ciiv}
\limsup_{N \to \infty} \  \displaystyle {1 \over N} \, {\cal H}_N \, \left[\,\mu^N \,| \,\nu_{m_1 (\cdot)}^N \,\times 
\,\nu_{m_2 (\cdot)}^N \right] \ = \ 0.
\end{equation}
 Then for all functions $G_1 (\cdot)$ and $G_2 (\cdot)$ continuous on $\R$, $\delta > 0$ and 
 $t \in [0, T]$, we have
 $$ \displaystyle \lim_{N \to \infty} \ \mu^N {\tilde S^N_t} \
 \left\{(\eta, \,\xi) : \left | \displaystyle  {1 \over N} \ \displaystyle \sum_{x \in\Z} \ \eta(x) \,G_1 (x / N) -
 \int_{\R} G_1 (\theta) \,\lambda_1 (t, \,\theta) \,d\theta \,\right| > \,\delta \right. $$
 $$\hbox{and} \left. \quad \left|\,\displaystyle \displaystyle  {1 \over N}\, \sum_{x \in\Z} \ \xi(x) \,G_2 (x / N) -
 \int_{\R} G_2 (\theta) \,\lambda_2 (t, \theta) \,d\theta \,\right| > \,\delta \,\right\} \ = \ 0 $$
where $(\lambda_1 (t, \cdot), \lambda_2 (t, \cdot))$ is the unique smooth solution of the 
system \eqref{hydro-PDE},  with initial conditions $\lambda_1 (0, \,d\theta) = m_1 (\theta)$, and $\lambda_2(0, \,d\theta) = m_2 (\theta)$.
\end{theor}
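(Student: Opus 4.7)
The plan is to deduce Theorem \ref{hydro-hthiv} from the torus result Theorem \ref{hydro-hth} by coupling the process $(\eta_t, \xi_t)$ on $\Z$ with its restriction to a large torus $\T_{CN}$ and then letting $C \to \infty$ after $N \to \infty$. A standard truncation reduces the problem to test functions $G_1, G_2$ supported in a fixed compact interval $[-C_0, C_0]$, since $\lambda_i(t, \cdot)$ is locally integrable. For each $C > C_0$, introduce the auxiliary process $(\eta^C_t, \xi^C_t)$ governed by the torus generator $\Omega_{CN}$, with initial distribution $\mu^{N,C}$ equal to the marginal of $\mu^N$ on $\N^{\T_{CN}}\times \N^{\T_{CN}}$.

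The central step is a finite speed of propagation estimate. Build both processes from a common Harris-type graphical representation (identical Poisson clocks), so that $\eta_t(x) = \eta^C_t(x)$ and $\xi_t(x) = \xi^C_t(x)$ as long as no signal originating outside $\T_{CN}$ has reached $x$. Because the diffusion in \eqref{hydro-process} is sped up by $N^2$ the random walks have typical microscopic displacement of order $N\sqrt{T}$ in time $T$, and since the reaction part of $\Omega$ is nearest-neighbor, one obtains for $C - C_0$ much larger than $\sqrt{T}$ that
$$P\Big(\exists\, t \leq T,\ x \in [-C_0 N, C_0 N]:\ \eta_t(x) \neq \eta^C_t(x) \ \text{or}\ \xi_t(x) \neq \xi^C_t(x)\Big) \longrightarrow 0$$
as $N \to \infty$. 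This is the principal obstacle: the birth rates $\beta_i$ in \eqref{Taux} grow linearly with the occupation numbers, so propagation would be accelerated unless those numbers are uniformly controlled. The required uniform mean bound comes from combining the initial condition $\mu^N(\eta(x) + \xi(x)) \leq M$ with the super-linear death rates $\delta_1$ and $\delta_2$, which play the role of a Lyapunov functional and are precisely why the delicate conditions of Chen (1992) recalled in the introduction have been verified for $\Omega$.

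The second step is to transfer the entropy hypothesis. From the definition \eqref{hydro-newentropy} and the non-negativity of relative entropy, together with the mean bound, hypothesis \eqref{ciiv} yields $(CN)^{-1}\,\Hh[\mu^{N,C}\,|\,\nu^{CN}_{m_1(\cdot)}\times \nu^{CN}_{m_2(\cdot)}] \to 0$ as $N \to \infty$ for each fixed $C > 0$, which is exactly hypothesis \eqref{ci} on the rescaled torus $\T_{CN}$ of macroscopic size $2C$. Theorem \ref{hydro-hth} then produces, for each $C$, the convergence of the torus empirical measure to $(\lambda^C_1, \lambda^C_2)$, the unique smooth solution of the system \eqref{hydro-PDE} on $\T_{2C}$ with the periodised initial data. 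Combining this with the coupling estimate gives, for every continuous $G_i$ supported in $[-C_0, C_0]$,
$$\frac{1}{N}\sum_{x \in \Z} \eta_t(x) G_1(x/N) \,\longrightarrow\, \int_{-C}^C G_1(\theta)\,\lambda^C_1(t,\theta)\,d\theta$$
in probability as $N \to \infty$, and similarly for $\xi_t$. The proof is concluded by letting $C \to \infty$: a standard stability argument for the Cauchy problem \eqref{hydro-PDE} ensures that $\lambda^C_i \to \lambda_i$ locally uniformly on $[-C_0, C_0]$, where $\lambda_i$ denotes the unique bounded solution on $\R$ with initial data $m_i$, which closes the argument.
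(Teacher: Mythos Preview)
Your overall architecture---couple the $\Z$ process with a torus process of radius $CN$, invoke Theorem~\ref{hydro-hth} on the torus, then let $C\to\infty$---is exactly the strategy of the paper, and the entropy-transfer and PDE-stability steps you sketch are in the right spirit. The substantive divergence, and the place where your argument has a genuine gap, is in the coupling itself.

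You propose a Harris-type graphical construction with ``identical Poisson clocks'' and argue that the two processes remain \emph{exactly equal} on $[-C_0N,C_0N]$ until a signal from the boundary arrives. There are two problems. First, a Harris graphical representation is not available here in any standard form: the birth rates $\beta_i$ in \eqref{Taux} are state-dependent and unbounded, so one cannot lay down a single family of Poisson processes independent of the configuration and then thin. Second, even granting a joint construction, your propagation estimate is asserted rather than proved: you correctly identify that the linear growth of $\beta_i$ is the obstacle, but you close it only with a \emph{uniform mean bound} on the occupation numbers. A mean bound does not give the high-probability exact-equality statement you wrote; many particles of mean $O(1)$ live between $C_0N$ and $CN$, and births along the way can relay the discrepancy inward.

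The paper avoids both issues by using a different, more robust coupling. It builds a \emph{basic coupling} at the particle level: at each site one splits the population into coupled particles $\eta^*,\xi^*$ (moving and reacting in tandem for both processes) and uncoupled residuals $\eta^{1*},\eta^{2*},\xi^{1*},\xi^{2*}$. One then controls the \emph{expected} total discrepancy
\[
\zeta^*_s(x)\;=\;\eta^{1*}_s(x)+\eta^{2*}_s(x)+\xi^{1*}_s(x)+\xi^{2*}_s(x)
\]
rather than trying to force it to be zero. The key computations are the pointwise bound $\overline\Omega_N^{\cal R}\zeta^*(x)\le \zeta^*(x)$ (using $\alpha_1+\alpha_2\le 1$ and the fact that infections/recoveries preserve $\zeta^*$) together with the uniform mean estimate $\overline E_{\mu^N}[\eta^1_t(x)+\xi^1_t(x)]\le M+tc_0$ coming from $\delta_i\ge\beta_i-c_0$. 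A Gronwall-type argument then yields
\[
\lim_{C\to\infty}\lim_{N\to\infty}\overline E_{\mu^N}\Bigl[\tfrac1N\sum_{x\in\Lambda_{AN}}\zeta^*_t(x)\Bigr]=0,
\]
which is precisely the $L^1$ closeness needed to transfer the empirical-measure convergence from the torus. This buys you two things your approach does not: it requires no graphical representation for unbounded rates, and it never needs more than first-moment control, which is exactly what the hypotheses provide.
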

\section{Proof of Theorems \ref{ICRP-tauxfaible} and \ref{IR-CR-phi}}
\label{ICRP-Sec: proof}

\noindent{\slshape{Proof of Theorem \ref{ICRP-tauxfaible}.}}  The aim of this section is to prove that, when  
$\phi  +  2 \, d\, \lambda <  1$  the processes converge to the all ``0" configuration, this result will be deduce from an ergodicity criterion established in Chen (1992).

\indent For any integer $n \geq 1$, we set $\Lambda_n = \{- n, \ldots, n \}^d$, $\tilde{\lambda}= 2d\lambda$,  \ $\tilde{\beta}= 2d\beta$. We consider the sequence of processes $(\xi^n_t)_{n\geq 0}$ (respectively, 
$(\eta^n_t)_{n\geq 0}$) defined on $\N^{\Lambda_n}$ as the restriction of $\xi_t$ (respectively, 
$\eta_t$) to $\Lambda_n$ with 
generator ${\hat \Omega}_n$ (respectively, $\bar {\Omega}_n)$. For any  cylinder function 
$f$ of the configuration $\xi$,
$${\hat \Omega}_n f(\xi) \ = \ {\hat \Omega}_n\super{1} f (\xi) \ + \  {\Omega}_n\super 2 f (\xi),$$
$$ \begin{array}{rcl} \vspace{3pt}
\hat \Omega_n\super 1 f (\xi)
 & = & \displaystyle \phi \sum_{x\, \in\, \Lambda_n} \xi(x)  
 \Big [f (\xi^{x, +}) - f(\xi)\Big] 
  +  \displaystyle \sum_{x\, \in\,\Lambda_n} \xi(x) \Big[f (\xi^{x,-}) -
 f (\xi)\Big] 
 \end{array} $$
  and 
\begin{eqnarray*}\vspace{4pt}
 \Omega_n\super 2 f (\xi) & = & \displaystyle  \sum_{x\in\Lambda_n} 
   \left (\sum_{ \substack {y\sim x \\ y \, \in \, \Lambda_n}} \xi(y)\right)  \Big(  \lambda\, \ind_{\{\xi(x)\, = \, 0\}} 
 \, + \,  \beta  \, \ind_{\{ \xi(x)\, > \, 0\}}\Big )
 \Big [f (\xi^{x, +}) - f (\xi) \Big]\\ \vspace{4pt}
&  =&  \displaystyle \sum_{x, y\, \in\,  \Lambda_n} \  \displaystyle  
p(y,x)\,\xi(y) \, \Big (\tilde {\lambda}\, \ind_{\{\xi(x)\,=\, 0\}} + \tilde {\beta}\, 
\ind_{\{ \xi(x)\,>\, 0\}}\Big ) 
 \Big [f (\xi^{x, +}) - f (\xi) \Big].\\ 
\end{eqnarray*}
For any cylinder function $f$ of the configuration $\eta$,
$${\bar \Omega}_n f(\eta)\ = \ {\bar \Omega}_n\super{1} f (\eta) \ + \ {\Omega}_n\super 2 f (\eta),$$
where
$$\begin{array}{rcl}\vspace{3pt}
{\bar \Omega}_n\super 1 f (\eta)
& = & \displaystyle \phi \sum_{x\, \in\, \Lambda_n} \eta(x)  
\Big [f (\eta^{x, +}) - f(\eta)\Big] 
+ \displaystyle \sum_{x\,\in\,\Lambda_n} \Big[f(\eta^{x})-f(\eta)\Big].
\end{array}$$
Given a constant $M > 1$ which can be as close to $1$ as desired, we set
\begin{equation}\label{ICRP-sequence}
 k_x \ = \ \sum_{n = 0}^{\infty} \ M^{- n} \ p^{(n)} (x, 0) \qquad \hbox{for all}  \ x \in \Z.
\end{equation}
Since $p (x, y)$ is translation invariant with $p (x, x) = 0$,  we have 
\begin{equation}\label{ICRP-controlled}
\sum_{y \in \Z} \ p (x, y) \ k_y \ \leq \ M \,k_x \qquad \hbox{and} \qquad \sum_{x \in \Z} \ 
k_x \ < \ + \infty,
\end{equation}
Now, given a site $x \in \Z$ and two configurations $\xi_1$ and $\xi_2$, we set
$$ \rho_x(\xi_1(x))\ = \ \xi_1(x), \qquad q_x (\xi_1) \ = \ \xi_1 (x) \,k_x \qquad \hbox{and} \qquad q_x (\xi_1, \xi_2) \ = \ |\xi_1 (x) - \xi_2 (x)| \,k_x. $$
We construct our processes on   
$$E_{0}=\{\xi \in  E= \N^{\Z}: \, q(\xi) \, = \,  \sum_{x\in\Z} \rho_x(\xi(x)) k_x < \infty \}.$$
The following theorem is a an adaptation of Theorems 13.8, 13.18 and 14.3. 

\begin{theor}\label{ICRP-Chen2} 
For every $1\leq n\leq m$, there exists a coupling generator 
$\hat {\Omega}_{n,m}$ of $\hat{\Omega}_n$ and $\hat {\Omega}_m$ such that for any 
$\,  z\in\Lambda_n$ and any $\xi_1,\xi_2\in E_{0},$
\begin{equation}\label{ICRP-cond3}
\hat{\Omega}_{n,m}\, q_z(\xi_1,\xi_2)\ \leq \  \sum_{x\in \Lambda_n} \ c_{xz}\,  q_x(\xi_1,\xi_2)\ + \sum_{x\in \Lambda_m \setminus \Lambda_n}\ 
g_{xz} \, q_x (\xi_2), 
\end{equation}
where the non-diagonal elements of matrices  $(c_{xy})_{x,y\in\Lambda_n}, \,  (g_{xy})_{x,y\in\Lambda_m},$ are all non-negative, and    
\begin{equation}\label{ICRP-cond44}\displaystyle \lim_{n\rightarrow \infty}\ \displaystyle \sup_{x\in\Lambda_n}\ \sum_{y\in\Lambda_n} \ (c_{xy}+ g_{xy}) <\  +\infty.\end{equation}
Assume additionally that the coefficients $(c_{xy})$ given in \eqref{ICRP-cond3} also satisfy
\begin{equation}\label{ICRP-cond4}
\exists \ \alpha > 0, \quad  \lim_{n\rightarrow \infty} \ \sup_{x\in\Lambda_n}  \sum_{y\in\Lambda_n} \ c_{xy}\  < \ -\alpha\  <\  0,
\end{equation}
\begin{equation}\label{ICRP-cond5}
\exists \ K \ < \ \infty, \quad   \lim_{n\rightarrow \infty} \ \sup_{x\in\Lambda_n} \sum_{y\in\Lambda_n}\  |c_{xy}| \  <  \ K.
\end{equation}
Then the Markov process $(\xi_t)_{t\geq 0}$  has at most one stationary distribution  $\pi$ on $(E, {\cal E})$ satisfying 
\begin{equation}\label{ICRP-norme}
 \pi\, q\ = \ \int_{E_0} \pi \, (d\zeta)\, q(\zeta)\ < \  \infty.
\end{equation}
 \end{theor}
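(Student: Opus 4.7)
I would follow Chen's general strategy in two movements: first, use the coupling $\hat{\Omega}_{n,m}$ between consecutive finite-volume restrictions to construct the infinite-volume process $\xi_t$ on $E_{0}$ as the limit of the chains $\xi^n_t$, and second, re-apply the same kind of coupling (taking $m=n$, so that the boundary $g$-term vanishes) to two copies of the limit process in order to extract exponential contraction from \eqref{ICRP-cond4} and conclude uniqueness.

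For the construction, the chains $\xi^n_t$ on $\N^{\Lambda_n}$ are well-defined nonexplosive pure-jump Markov processes. Starting $\xi^n$ and $\xi^m$ (with $n\leq m$) from the same initial datum, extended by $0$ outside $\Lambda_n$, Dynkin's formula applied to the function $q_z$ together with the hypothesis \eqref{ICRP-cond3} gives
\[
\tfrac{d}{dt}\,\E\bigl[q_z(\xi^n_t,\xi^m_t)\bigr] \ \leq \ \sum_{x\in\Lambda_n} c_{xz}\,\E\bigl[q_x(\xi^n_t,\xi^m_t)\bigr] \ + \ \sum_{x\in\Lambda_m\setminus\Lambda_n} g_{xz}\,\E\bigl[q_x(\xi^m_t)\bigr].
\]
Summing over $z\in\Lambda_n$, invoking \eqref{ICRP-cond44}--\eqref{ICRP-cond5}, and applying Gronwall's lemma yields a uniform-in-$t$ bound on $[0,T]$ for $\sum_{z\in\Lambda_n}\E[q_z(\xi^n_t,\xi^m_t)]$ in terms of the initial weighted mass sitting on $\Lambda_m\setminus\Lambda_n$, which vanishes as $n\to\infty$ by $\sum_x k_x<\infty$. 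Hence $(\xi^n_t)$ is Cauchy in the weighted $L^1$-sense, and the limit $\xi_t$ lives in $E_0$.

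Now let $\pi_1,\pi_2$ be two stationary distributions satisfying \eqref{ICRP-norme}. I sample independently $\xi^{(1)}(0)\sim\pi_1$, $\xi^{(2)}(0)\sim\pi_2$ and run the self-coupling $\hat{\Omega}_{n,n}$, for which the second sum in \eqref{ICRP-cond3} is empty. Setting $U_n(t):=\sum_{z\in\Lambda_n}\E\bigl[q_z(\xi^{(1)}(t),\xi^{(2)}(t))\bigr]$, Dynkin's formula and the Fubini-type exchange of sums justified by \eqref{ICRP-cond5} give
\[
\tfrac{d}{dt}U_n(t) \ \leq \ \sum_{x\in\Lambda_n}\E\bigl[q_x(\xi^{(1)}(t),\xi^{(2)}(t))\bigr]\Bigl(\sum_{z\in\Lambda_n} c_{xz}\Bigr) \ \leq \ -\alpha\,U_n(t),
\]
the last inequality holding for $n$ large enough by \eqref{ICRP-cond4}. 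Gronwall then yields $U_n(t)\leq e^{-\alpha t}U_n(0)\leq e^{-\alpha t}(\pi_1 q+\pi_2 q)$, and letting $n\to\infty$ (justified by the first movement) produces an infinite-volume coupling of $\pi_1$ and $\pi_2$ whose expected weighted distance $\sum_x k_x\,\E|\xi^{(1)}(t,x)-\xi^{(2)}(t,x)|$ still decays like $e^{-\alpha t}$. Letting $t\to\infty$ forces this Wasserstein-type distance between $\pi_1$ and $\pi_2$ to vanish, whence $\pi_1=\pi_2$.

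The principal obstacle is the careful bookkeeping of the exchanges of summation, expectation and derivative, and the passage to infinite volume in the differential inequality: one must check that the couplings $\hat{\Omega}_{n,n}$ converge to a bona fide self-coupling of the limit process, and that the finite-volume bounds survive this limit. The specific choice of weights $k_x$ in \eqref{ICRP-sequence} through a geometrically-damped random-walk kernel is exactly what allows this, since property \eqref{ICRP-controlled} is the algebraic fact behind the row-sum bound in \eqref{ICRP-cond4}, and $\sum_x k_x<\infty$ simultaneously controls the boundary contributions from the Cauchy argument and the initial weighted masses $\pi_j q$ entering the contraction estimate.
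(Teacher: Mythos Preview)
Your sketch is a reasonable outline of Chen's general method (construction via coupled finite-volume approximations, then exponential contraction of a weighted Wasserstein distance for uniqueness), but you should be aware that the paper does \emph{not} actually prove Theorem~\ref{ICRP-Chen2}. The theorem is stated explicitly as ``an adaptation of Theorems 13.8, 13.18 and 14.3'' from Chen (1992), and no proof is given: the result is quoted, not established. The computations that follow the statement of Theorem~\ref{ICRP-Chen2} in the paper---the explicit construction of $\Omega_{n,m}^{(2)}$, the case analysis on $b_z(x,y)$, the derivation of $c_{yz}$ and $g_{yz}$---are not a proof of the abstract theorem but rather the \emph{verification of its hypotheses} for the particular generators $\hat\Omega_n$ and $\bar\Omega_n$ of the IRP$(\infty)$ and CRP$(\infty)$. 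That verification is what yields Theorem~\ref{ICRP-tauxfaible}.

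So there is no ``paper's own proof'' to compare your argument against; what you have written is essentially a condensed account of Chen's original arguments. One point that would need tightening if you were to write this out in full: in your uniqueness step you sample $\xi^{(1)}(0)\sim\pi_1$ and $\xi^{(2)}(0)\sim\pi_2$ and then ``run the self-coupling $\hat\Omega_{n,n}$''. But $\hat\Omega_{n,n}$ couples two copies of the \emph{finite}-volume chain on $\Lambda_n$, whereas $\pi_1,\pi_2$ are stationary for the \emph{infinite}-volume process; restricting the initial laws to $\Lambda_n$ breaks stationarity, so the marginals at time $t$ are no longer $\pi_1,\pi_2$ and the final ``$t\to\infty$'' step does not directly give $\pi_1=\pi_2$. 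The clean route (which you gesture at) is to first pass to the limit in the coupling itself, obtain an infinite-volume coupling generator whose action on $q_z$ still satisfies the contraction inequality, and only then run the Gronwall argument at infinite volume with genuinely stationary marginals.
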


\indent Our main tool is to use repeatedly basic coupling of the different generators describing all the aspects of the model under study. We write in detail the first one, the others are built in the same spirit.    

\indent To check Condition \eqref{ICRP-cond3}, we use  basic coupling.
Let $n$ and $m$ be two integers such that $1\leq n\leq m$.
We denote by $\hat \Omega_{n,m}\super 1$ the coupled generator associated to  
$\hat \Omega_n\super 1$ and $\hat\Omega_m\super 1$, and by $ \Omega_{n,m}\super 2$ the coupled 
generator associated to $\Omega_n\super 2$ and $\Omega_m\super 2$.   
In the same way, we define the coupled generator ${\bar \Omega}_{n,m}\super{1}$.
To lighten our calculations, we set 
$$a_i(x,y)= \xi_i(y) \Big( \lambda\  \ind_{\{ \xi_i(x)=0\}} \ + \ \beta\ \ind_{\{ \xi_i(x)> 0\}} \Big ) \quad \hbox{for}\ i=1,2 \ \hbox{and }\ x,y\in\Z.$$
We define the coupling $\Omega_{n,m}\super 2$ describing the infections originated from neighboring sites as follows
$$\begin{array}{rcl} \vspace{3pt}
  \Omega_{n,m}\super 2 f (\xi_1,\xi_2)
& = & 2 d\,  \displaystyle  \sum_{x, y\, \in\, \Lambda_n}   p(x,y)\, (a_1(x,y)\wedge a_2(x,y))\,   \Big [f (\xi^{y, +}_1,\xi^{y, +}_2 ) - f(\xi_1,\xi_2)\Big] \\ \vspace{3pt}
& + &  2 d\,  \displaystyle  \sum_{x,y\, \in\, \Lambda_n}  p(x,y)\, (a_1(x,y)-a_2(x,y))^+\,  
 \Big [f (\xi^{y, +}_1,\xi_2 )-f(\xi_1,\xi_2)\Big]\\ \vspace{3pt}
 & + &  2 d\,  \displaystyle  \sum_{x,y\, \in\, \Lambda_n} p(x,y) \,    
(a_2(x,y)- a_1(x,y))^+\,  
 \Big [f (\xi_1,\xi^{y, +}_2 ) - f(\xi_1,\xi_2)\Big]\\ \vspace{3pt} 
& + & 2 d\,  \displaystyle  \sum_{x\, \in\, \Lambda_m \setminus \Lambda_n} \sum_{y\, \in\,  \Lambda_m} p(x,y) \,   a_2(x,y)    
 \Big [f (\xi_1,\xi^{y, +}_2 ) - f(\xi_1,\xi_2)\Big] \\ \vspace{3pt}
 & + & 2 d\,  \displaystyle  \sum_{x\, \in\, \Lambda_n} \sum_{y\in \Lambda_m\setminus \Lambda_n}
 p(x,y)\,  a_2(x,y)
 \Big [f (\xi_1,\xi_2^{y,+} ) - f(\xi_1,\xi_2)\Big]
 \end{array} $$
The coupled generator $\Omega_{n,m}\super 2$ describes the outside infections from site $x$ to $y$ 
where $x\sim y$ and ($x,y\in \Lambda_n $) or ($x\in \Lambda_m\setminus \Lambda_n$ and $y\in\Lambda_m$) 
or ($x\in \Lambda_n$ and $y\in \Lambda_m\setminus \Lambda_n$), for the processes whose  generators 
are $\Omega\super 2_n$ and $\Omega\super 2_m$. 

\indent We now deal with the coupled  generators $\hat\Omega_{n,m}\super 1$ (respectively, $\bar{\Omega}_{m,n}\super 1$) defined as the sum  of 
$ \hat \Omega_{n,m}\super {1,i}$,  $i=1,2$  (respectively, $ \bar{ \Omega}_{n,m}\super {1,i}$, $i=1,2$) with the coupled generator $ \hat{ \Omega}_{n,m}\super {1,1}$ (respectively, $\hat{ \Omega}_{n,m}\super {1,2}$) describing the within infections (respectively, recoveries) at site $x\in\Lambda_n$.
However, the following coupled generators 
$$ {\bar \Omega}_{n,m}\super {1,1} f (\eta_1,\eta_2) = {\hat \Omega}_{n,m}\super {1,1} f (\eta_1,\eta_2) \qquad \hbox{and} \qquad {\bar \Omega}_{n,m}\super {1,2} f (\eta_1,\eta_2) = \displaystyle  \sum_{x\, \in\, \Lambda_m}   \Big [f (\eta^x_1,\eta^x_2) - f(\eta_1,\eta_2)\Big],$$
describe the within infections and cluster recovery in a given site for the CRP($\infty$). For sites $x, y, z \in \Z$, we set
 $$ \begin{array}{rcl}
b_z (x, y) & = & (a_1 (x, y) - a_2 (x, y))^+ \ [q_z (\xi_1 \super{x, +}, \xi_2) - q_z (\xi_1, \xi_2)] \vspace{5pt} \\ & + &
    (a_2 (x, y) - a_1 (x, y))^+ \ [q_z (\xi_1, \xi_2 \super{x, +}) - q_z (\xi_1, \xi_2)]. \end{array} $$
First of all, we observe that
\begin{equation}
\label{ICRP-q1}
   q_z (\xi_1 \super{x, +}, \xi_2) \ = \ q_z (\xi_1, \xi_2 \super{x, -}) \ = \
  \left\{\begin{array}{ll}
   q_z (\xi_1, \xi_2) + k_z & \hbox{when $\xi_1 (x) \geq \xi_2 (x)$ and $x = z$} \vspace{5pt} \\
   q_z (\xi_1, \xi_2) - k_z & \hbox{when $\xi_1 (x) < \xi_2 (x)$ and $x = z$}\vspace{5pt} \\
   0  & \hbox{when} \ x \neq z,
 \end{array} \right.
\end{equation}
 while
\begin{equation}
\label{ICRP-q2}
   q_z (\xi_1, \xi_2 \super{x, +}) \ = \ q_z (\xi_1 \super{x, -}, \xi_2) \ = \
  \left\{\begin{array}{ll}
   q_z (\xi_1, \xi_2) + k_z & \hbox{when $\xi_1 (x) \leq \xi_2 (x)$ and $x = z$} \vspace{5pt} \\
   q_z (\xi_1, \xi_2) - k_z & \hbox{when $\xi_1 (x) > \xi_2 (x)$ and $x = z$} \vspace{5pt} \\
   0 & \hbox{when} \  x \neq z.
\end{array} \right.
\end{equation}
In particular,  by decomposing according to whether $\xi_1 (x)$ and $\xi_2 (x)$ are different from or equal to $0$, we obtain
 $$\begin{array}{rcl}\vspace{5pt}
b_z (x, y) & = & \lambda\ |\xi_1 (y) - \xi_2 (y)| \ k_z \ \ind_{\{\xi_1 (x)  =  \xi_1 (x)  =  0 \}} 
\ + \ \beta\ |\xi_1 (y) - \xi_2 (y)| \ k_z \ \ind_{\{\xi_1 (x)>0, \  \xi_1 (x)>0\}} \vspace{5pt} \\ 
& + & (\beta\,  \xi_2 (y) - \lambda\, \xi_2 (y)) \ k_z \ \ind_{\{\xi_2 (x)  >  \xi_1 (x)  =  0\}}  \ + \ (\beta\, \xi_1 (y) - \lambda \, \xi_1 (y)) \ k_z \ 
\ind_{\{\xi_1 (x)  >  \xi_2 (x)  =  0 \}}\vspace{8pt} \\ 
 & \leq &  \lambda \ |\xi_1 (y) - \xi_2 (y)| \ k_z \ = \ q_y (\xi_1, \xi_2) \ k_z / k_y \end{array} $$
when $x = z$, and $b_z (x, y)=0$  when $x \neq z$. We conclude that
\begin{equation}
\label{ICRP-bound}
b_z (x, y) \ \leq \lambda\, \ q_y (\xi_1, \xi_2) \ k_z / k_y \quad \hbox{if $x = z$} \qquad \hbox{and} \qquad
 b_z (x, y) \ = \ 0 \quad \hbox{if $x \neq z$}.
\end{equation}
By using \eqref{ICRP-bound}, we obtain that for any site $z \in \Lambda_n \subset \Lambda_m$,
 $$ \begin{array}{rcl}
     \Omega_{n, m}\super 2 q_z (\xi_1, \xi_2) & = &
    \displaystyle 2 d \, \sum_{y \in \Lambda_n} \ p (z, y) \ b_z (z, y) \vspace{5pt} \\ 
& + & \displaystyle 2 d \,  \sum_{y \in \Lambda_m \setminus \Lambda_n} \ p (z, y) \
      a_2 (z, y) \ [q_z (\xi_1, \xi_2 \super{z, +}) - q_z (\xi_1, \xi_2)] \vspace{5pt} \\ 
  & \leq & \displaystyle 2 d \, \lambda  \, \sum_{y \in \Lambda_n} \ p (z, y) \ q_y (\xi_1, \xi_2) \ k_z / k_y \vspace{5pt} \\ & + &
    \displaystyle 2 d \, \lambda  \sum_{y \in \Lambda_m \setminus \Lambda_n} \ p (z, y) \ q_y (\xi_2) \ k_z / k_y. \end{array} $$
 Now, assume that $\xi_1 (z) > \xi_2 (z)$ for some $z \in \Lambda_n$.
 From \eqref{ICRP-q1} and \eqref{ICRP-q2}, it follows that
 $$ \begin{array}{l}
  (\hat \Omega_{n, m}\super{1,1} + \hat \Omega_{n, m}\super {1,2}) \,q_z (\xi_1, \xi_2) \ = \
   \phi \ (\xi_1 (z) - \xi_2 (z)) \ k_z \vspace{5pt} \\ 
\hspace{100pt} - \
  (\xi_1 (z) - \xi_2 (z)) \ k_z \ = \ (\phi - 1) \,q_z (\xi_1, \xi_2) \end{array} $$
The same holds when $\xi_1 (z) < \xi_2 (z)$. In particular,
$$ (\hat \Omega_{n, m}\super{1,1} + \hat \Omega_{n, m}\super{1,2}) \,q_z (\xi_1, \xi_2) \ \leq \ 
(\phi - 1) \,q_z (\xi_1, \xi_2) $$
in any case since both members of the inequality are equal to 0 when $\xi_1 (z) = \xi_2 (z)$.
Finally, by observing that 
$$q_z (\xi_1 \super{x}, \xi_2 \super{x}) \ = \
\left\{\begin{array}{ll} 
q_z (\xi_1, \xi_2)    & \hbox{when} \ x \neq z \vspace{5pt} \\
0                     & \hbox{when} \ x = z,\end{array} \right.$$
we have
$$ \bar \Omega_{n, m}\super{1,2} q_z (\eta_1, \eta_2) \ = \ -  \,q_z (\eta_1, \eta_2). $$
Putting things together, we get the upper bound
 \begin{equation}\label{ICRP-lip} \hat \Omega_{n, m}  q_z (\xi_1, \xi_2) \ \leq \
\sum_{y \in \Lambda_n} \ c_{yz} \ q_y (\xi_1, \xi_2) \ + \ \sum_{y \in \Lambda_m \setminus\Lambda_n} \ g_{yz} \ q_y (\xi_2) 
\end{equation}
 where the coefficients $c_{yz}$ and $g_{yz}$ are given by
 $$ c_{yz} \ = \ \left\{\begin{array}{ll}
    \phi-1 & \hbox{if $y = z$} \vspace{5pt} \\
    \tilde {\lambda} \,p (z, y) \,k_z / k_y & \hbox{if $y \neq z$} \end{array} \right. \qquad \hbox{and} \qquad
    g_{yz} \ = \tilde{\lambda}  \, p (z, y) \,k_z / k_y. $$
Inequality \eqref{ICRP-lip} also holds for the coupled generator $\bar\Omega_{n,m}$.  Condition \eqref{ICRP-cond3} of Theorem \ref{ICRP-Chen2} is then satisfied.  By \eqref{ICRP-sequence} and \eqref{ICRP-controlled}, for any site $y\in \Lambda_m$ and any constant $M>1$,
we have  
 $$\displaystyle \sum_{z\in \Lambda_n} c_{yz}\ \leq \ \phi -1 \, +\,  2\,d\, \lambda \,  \sum_{z\in\Lambda_n}\  p(z,y)\
\displaystyle {k_z\over k_y}\ \leq\  \phi-1 + 2\ d \ \lambda\ M.$$
In particular, condition \eqref{ICRP-cond4} in Theorem \ref{ICRP-Chen2} holds whenever $\phi + 2\ d \ \lambda <1$.
In other respects conditions \eqref{ICRP-cond44} and \eqref{ICRP-cond5} are trivial. This completes the proof of Theorem \ref{ICRP-tauxfaible}.\\

\noindent{\slshape{Proof of Theorem \ref{IR-CR-phi}.}} We start by proving the first statement, i.e., if $\lambda> \lambda_c$ an epidemic may occur.  First, we note that by using basic coupling, the probability of an epidemic is nondecreasing with respect to the initial configuration and to each of the parameters $\lambda$, $\beta$, and $\phi$.
\begin{lemma}
\label{ICRP-monotonicity1}
The IRP($\infty$) and CRP($\infty$) are attractive and monotone with respect to the parameters $\lambda$, $\beta$ and $\phi$.\end{lemma}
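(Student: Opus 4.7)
The plan is to establish both properties via a single basic coupling construction built on the Harris graphical representation introduced in Section \ref{ICRP-Sec: resultats}. I would first treat the truncated processes IRP($\kappa$) and CRP($\kappa$), then transfer the conclusions to IRP($\infty$) and CRP($\infty$) by a monotone limit.

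For attractiveness, given $\xi_0 \leq \xi_0'$ pointwise I would drive both copies by the same Poisson clocks $\{T_n\super{x,z,i}\}$, $\{\tilde T_n\super{x,z,i}\}$, $\{U_n\super{x,i}\}$ and $\{V_n\super{x,i}\}$ and verify event by event that the partial order $\xi \leq \xi'$ is preserved. Most cases are trivial; the instructive ones are the outside-infection clock $T_n\super{x,z,i}$ when $\xi(z) = 0 < \xi'(z)$, for which only $\xi$ moves but the new value $\xi(z) = 1$ is still bounded by $\xi'(z)$; and the IRP recovery clock $V_n\super{x,i}$ when $\xi(x) < i \leq \xi'(x)$, for which only $\xi'$ decreases and the post-event inequality $\xi(x) \leq \xi'(x) - 1$ still holds because $\xi(x) \leq i - 1$. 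The CRP recovery is even simpler, since $V_n\super{x,1}$ resets both $\eta(x)$ and $\eta'(x)$ to zero whenever either is positive.

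For monotonicity in a parameter, for instance $\lambda < \lambda'$, I would use thinning: realize the rate-$\lambda'$ family $\{T_n\super{x,z,i}\}$ as the superposition of an independent rate-$\lambda$ family and a rate-$(\lambda' - \lambda)$ family, drive the smaller process with the former and the larger one with the full superposition, and start both copies from the same initial configuration. Since $T$-events only flip a site from $0$ to $1$, the extra arrivals can at worst raise $\xi'$, so the order is maintained. The identical argument applied to the $\tilde T$ clocks (for $\beta$) and the $U$ clocks (for $\phi$) yields monotonicity in those parameters as well; combining thinning with the attractiveness coupling above handles monotonicity in initial configuration and parameters simultaneously.

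The final step is to transfer these facts to infinite cluster size. I would couple the entire family IRP($\kappa$), $\kappa \in \N$, together by using a single family of Poisson clocks indexed by $i \in \N$, so that IRP($\kappa$) is the projection obtained by ignoring events with index $i > \kappa$. The construction of Chen (1992), already invoked for existence and uniqueness of IRP($\infty$) under $\beta \leq \lambda$, guarantees that the coupled processes converge to a well-defined limit (and likewise for CRP). Since pointwise ordering is closed under limits, both attractiveness and parameter monotonicity pass through. The main obstacle is precisely this last passage: one must check that with the common clock system, only finitely many events affect any bounded portion of the configuration on any bounded time interval, so that the limit is genuine and the pointwise order is meaningful — this is exactly the kind of sum-of-rates control that Chen's existence criterion supplies.
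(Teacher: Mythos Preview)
Your proposal is correct and follows the paper's approach, which simply invokes basic coupling without spelling out any details. Your implementation via the graphical representation for IRP($\kappa$) followed by the monotone limit $\kappa\to\infty$ is a legitimate way to fill in those details; the paper would equally allow a direct coupling at the level of IRP($\infty$) through Chen's generator-based framework (cf.\ the coupling operator $\hat\Omega_{n,m}$ built in the proof of Theorem~\ref{ICRP-tauxfaible}), but the content is the same.
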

Again by using basic coupling of the basic contact 
process with parameter $\lambda$ and the IRP($\infty$) (respectively,  CRP($\infty$)) with parameter $(\lambda,\beta,\phi)$ we show that both IRP($\infty$) and CRP($\infty$) have more infected individuals than the contact process.  This together,  with 
Lemma \ref{ICRP-monotonicity1},  implies that, when $\lambda> \lambda_c$, an epidemic may occur for any $\phi\geq 0$ and $\beta\geq 0$ for both processes.
To prove the second statement, we  will show that there exist $\phi_1>0$ and $\phi_2<\infty $ 
such that  if $\phi<\phi_1$ there is no epidemic, while  if $\phi>\phi_2$ an epidemic may occur. Due to the monotonicity with respect to the within infection rate $\phi$, this will imply the 
existence of  $\phi_c\in[\phi_1,\phi_2]$ such that Theorem \ref{IR-CR-phi} holds.

\indent  The existence of $\phi_1$ follows from the fact that when $\phi + 2 d (\lambda\vee\beta) < 1$, the CRP($\infty$) and IRP($\infty$) converge to the ``all 0'' configuration.
In others words, if \eqref{ICRP-TF} holds then there is not epidemic for the IRP($\infty$) and the CRP($\infty$). We  now deal with the existence of $\phi_2$.  
Let $\xi_t\super 1$  denote the IRP($\infty$) with parameters 
$(\lambda,0,\phi)$ and $\xi_t\super 2$ denote the IRP($\kappa$) with parameters 
$(\lambda,0,\phi)$. Using basic coupling we prove that if 
$\xi_0\super 1 (x) \geq \xi_0\super 2 (x)$ for any $ x\in\Z$ at time $0$, then
$\xi_t \super{1}$ and $\xi_t \super{2}$ can be constructed on  the same probability 
space in such way that 
$$ P_{(\xi_0 \super{1}, \xi_0 \super{2})} (\forall \ x \in \Z, \ \xi_t \super{1} (x) 
 \geq \xi_t \super{2} (x)) \ = \ 1,$$
where $P_{(\xi_0 \super{1}, \xi_0 \super{2})}$ is the law of the coupled process starting from $(\xi_0 \super{1}, \xi_0 \super{2})$. It follows that the IRP($\infty$) with parameters $(\lambda_1,0,\phi_1)$ has more infected individuals than the IRP($\kappa$) with parameters $(\lambda_2,0,\phi_2)$. It follows that
by Theorem 3, Belhadji and Lanchier (2006), an epidemic may occur for the IRP($\kappa$) with parameters  ($\lambda,\beta,\phi$) for all $\kappa >1$, provided the within infection rate $\phi$ is greater than some critical value. By Lemma \ref{ICRP-monotonicity1} the existence of $\phi_2$  such that an epidemic may occur for the IRP($\infty$) with parameters $(\lambda,\beta, \phi)$, for all $\phi\geq \phi_2$ and $\beta\leq \lambda$ follows. 
In the same way, that is by basic coupling we obtain that the CRP($\infty$) with parameters $(\lambda,\beta, \phi)$ has more infected individuals than the CRP($\kappa$) with parameters $(\lambda,0,\phi)$. Due to monotonicity of the CRP($\kappa$) with respect to $\kappa$,  
we can fix $\kappa$ such that $\kappa\lambda> \lambda_c$, and apply Theorem 1, Schinazi (2002) and Lemma \ref{ICRP-monotonicity1},  to get  the existence of  $\phi_2$  such that the CRP($\infty$) with  parameters $(\lambda,\beta,\phi)$ is not ergodic for all $\phi\geq \phi_2$ and $\beta\geq 0$. 
Thus Theorem \ref{IR-CR-phi} follows.

\section{Proof of theorems \ref{hydro-hth} and \ref{hydro-hthiv}}
\label{hydro-Sec: hydro}

\subsection{One block estimate.}
\label{hydro-Sec: oneblock}

\indent  It allows the replacement of a local function $h(\eta(x),\xi(x)), \, x\in \Z$ by a function of 
the {\it empirical density} $\eta^k(x)$ (respectively, $\xi^k(x)$) of healthy  (respectively, infected) individuals in a box of length $2\, k + 1$, $k\in\N$ centered at $x$:
 \begin{equation} \label{hydro-emdensity}
\eta^k (x) \ = \  \displaystyle {1 \over 2 k + 1} \ \displaystyle \sum_{|y-x| \leq k} \ \eta (y) \qquad \hbox{and} \qquad
\xi^k (x) \ = \  \displaystyle  {1 \over 2 k + 1} \ \displaystyle \sum_{|y-x| \leq k} \ \xi (y).\end{equation}

\begin{propo}\label{oneblock}[One block estimate](Mourragui, 1996) 
 Let $h$ be a bounded function on $\N \times \N$.  We have 
 $$ \displaystyle \lim_{k \to \infty} \ \lim_{N \to \infty} \displaystyle  \int_{\N^{\T_N}} \,\left ( {1 \over N}
 \sum_{x = 0}^{N - 1} \int_0^T V_k (\eta_s (x), \xi_s (x)) \,ds \right ) \ d\mu^N \ = \ 0, $$
 where
 $$ V_k (\eta(x),\xi(x)) \ = \ \left|\,  \displaystyle  {1 \over 2 k + 1} \
 \displaystyle \sum_{|x-y| \leq k } \ (h (\eta (y), \xi (y)) - {\tilde h} 
\,(\eta^k (x), \,\xi^k (x)) \,\right|. $$
\end{propo}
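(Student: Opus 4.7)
The plan is to follow the classical Guo--Papanicolaou--Varadhan replacement scheme, adapted to Poisson reference measures as in Mourragui (1996) and Perrut (2000). Fix a constant $\rho>0$ and let $f_t^N$ denote the density of $\mu^N S_t^N$ with respect to $\nu_\rho^N\times \nu_\rho^N$. Combining the entropy inequality, the hypothesis \eqref{ci} and a Feynman--Kac bound on the time-integrated expectation, the convergence to be proved reduces to a variational estimate of the form
$$ \lim_{k\to\infty}\; \limsup_{N\to\infty}\; \sup_{f}\; \Big\{ \int V_k(\eta(0),\xi(0))\, f\, d(\nu_\rho^N\times\nu_\rho^N) \;-\; \gamma\, N^2\, {\cal D}_N(\sqrt{f}) \Big\} \;\leq\; 0, $$
where the supremum runs over probability densities $f$ against $\nu_\rho^N\times \nu_\rho^N$ and ${\cal D}_N$ is the Dirichlet form associated with the diffusion part $\Omega\super{{\cal D}}_N$. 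Boundedness of $h$, hence of $V_k$, ensures that both the Feynman--Kac step and the variational reformulation are well defined.

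Next, by translation invariance of $\nu_\rho^N\times\nu_\rho^N$ and of ${\cal D}_N$, and because $V_k$ depends on the configuration only through coordinates in $\Lambda_k=\{-k,\ldots,k\}$, the variational problem may be localized to densities supported on $\Lambda_k$. The prefactor $N^2$ then dominates any $k$-dependent constant as $N\to\infty$, forcing the optimal $f$ to be asymptotically invariant under the restricted two-species symmetric random-walk dynamics on $\Lambda_k$. The invariant measures of this restricted dynamics are precisely the canonical distributions $\nu^k_{n_1,n_2}$ obtained by conditioning $\nu_\rho^N\times\nu_\rho^N$ on $\sum_{|y|\leq k}\eta(y)=n_1$ and $\sum_{|y|\leq k}\xi(y)=n_2$; a spectral gap estimate of order $k^{-2}$, classical for independent random walks with Poisson invariant measures and uniform in the conditioned values, reduces the problem to
$$ \lim_{k\to\infty}\; \sup_{n_1,n_2}\; \int V_k(\eta(0),\xi(0))\, d\nu^k_{n_1,n_2} \;=\; 0. $$

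This last limit is the equivalence of ensembles for product Poisson measures: under $\nu^k_{n_1,n_2}$, the average $(2k+1)^{-1}\sum_{|y|\leq k}h(\eta(y),\xi(y))$ converges in $L^1$ to the grand-canonical expectation $\widetilde h(n_1/(2k+1),n_2/(2k+1))$ defined in \eqref{hydro-tilde}, so that $V_k$ has vanishing canonical mean as $k\to\infty$.

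The main obstacle is that the empirical densities $\eta^k(0)$ and $\xi^k(0)$ take values in all of $\N$, so both the spectral gap and the equivalence of ensembles must be controlled \emph{uniformly} in $(n_1,n_2)$. The uniform bound $\|V_k\|_\infty\leq 2\|h\|_\infty$ afforded by boundedness of $h$ handles the contribution of blocks with atypically large total particle numbers, while the a priori moment control on $f_t^N$ inherited from \eqref{ci} disposes of the remaining tail. Once this uniformity is secured, combining the four steps yields the stated one-block estimate.
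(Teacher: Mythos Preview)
The paper does not actually prove this proposition: immediately after the statement it writes ``We refer to Mourragui (1996) for its proof.'' Your sketch follows precisely the Guo--Papanicolaou--Varadhan scheme that Mourragui carries out in that reference for reaction--diffusion processes with Poisson reference measures (entropy/Feynman--Kac reduction to a variational problem, localization to a block of size $2k+1$, spectral-gap reduction to canonical measures, and equivalence of ensembles), so your approach is the same as the one the paper invokes.

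One minor remark on the last paragraph: the way you split the uniformity issue is slightly inverted. Since $h$ is bounded, $V_k$ is uniformly bounded, so the only place uniformity in $(n_1,n_2)$ could fail is the equivalence-of-ensembles step when $n_1/(2k+1)$ or $n_2/(2k+1)$ is large. The standard cure, as in Mourragui, is to use the a priori entropy bound to show that the space--time average of $\ind_{\{\eta^k(x)+\xi^k(x)>A\}}$ is small for large $A$, and then invoke equivalence of ensembles only on the bounded-density event. Your sentence attributes the large-$A$ control to the bound on $\|V_k\|_\infty$ and the ``remaining tail'' to moments, whereas it is really the entropy/moment bound that handles large $A$ and the boundedness of $V_k$ that makes this cutoff harmless.
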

We refer to Mourragui (1996) for its proof.

\pagebreak

\indent In what follows, we will use intensively the following change of variables formulas stated for each cylinder function on $\N^{\T_N}\times \N^{\T_N}$. 
Let $\nu^N_a \times \nu^N_b$ be the product of Poisson measures on $\N^{\T_N}\times \N^{\T_N}$, with arbitrary parameters $a > 0$ and $b > 0$ respectively. We have:
\begin{equation}\label{cv} \begin{array}{rcl} \vspace{3pt}
\noindent \displaystyle\int f(\eta^{x,-}, \xi^{x,+}) \,  d \,(\nu^N_a \times \nu^N_b) (\eta,\xi) &  = & \displaystyle {a\over b}\, 
\int {\xi(x) \over 1+ \eta(x)} f(\eta,\xi) \, d \, (\nu^N_a \times \nu^N_b) (\eta,\xi),  \\ \vspace{3pt}
\noindent \displaystyle \int f(\eta^{x,+}, \xi^{x,-})\, d\, (\nu^N_a \times \nu^N_b) (\eta,\xi)  & = & \displaystyle {b\over a}\,
\int {\eta(x)\over 1+\xi(x)} \, f(\eta,\xi) \, d\, (\nu^N_a \times \nu^N_b) (\eta,\xi). 
\end{array}
\end{equation}

\noindent We will use  the entropy inequality: If $\alpha^N$ and $\beta^N$ are two measures on $\N^{\T_N}\times \N^{\T_N}$, then  for all bounded function $U$ and $\alpha > 0$, 
\begin{equation}\label{intro-entropique}
\int U \, d\,\beta^N \leq {1\over \alpha }\log \int \exp(\alpha\, U) \, d\, \alpha^N + 
{1\over \alpha}\,  \Hh\, \left[\beta^N|  \alpha^N\right ].
\end{equation}

\subsection{Proof of Theorem \ref{hydro-hth}}
\label{ICRP-Sec: proof3}

It is divided in several lemmas.  The objective is to prove that 
 \begin{equation}\label{zero}
\displaystyle \lim_{N \rightarrow + \infty}\mu_t^N(A^{G_1,G_2,\delta}_{N})=0,
  \end{equation}
where
\begin{equation}\label{hydro-set}A^{G_1,G_2,\delta}_{N} \ =\  \left  \{(\eta,\xi): \left | \displaystyle  {1\over N} \displaystyle \sum_{x=0}^{N-1} \eta(x) \, G_1(x/N) 
 -\int_0^1 G_1(\theta)\lambda_1(t,\theta) d\theta \right | >\delta \right.  \end{equation}
 $$\hspace{6cm}\hbox {and}\left.  \quad \left | \displaystyle  {1\over N} \sum_{x=0}^{N-1} \xi(x)\, 
G_2(x/N) -\int_0^1 G_2(\theta)\lambda_2(t,\theta) d\theta \right | > \delta \right \},$$
and $(\lambda_1(t,.),\lambda_2(t,.))$ is the solution of \eqref{hydro-PDE}. 
The entropy inequality \eqref{intro-entropique}  allows us to write
\begin{equation}\label{hydro-zero2}\mu_t^N(A^{G_1,G_2,\delta}_{N,t})\leq {  \displaystyle {1\over N} \log2+ {1\over N}\,  \Hh \, \left [\mu^N_t| \nu_{\lambda_1(t,.)}^N\times \nu_{\lambda_2(t,.)}^N\right ]\over\displaystyle 
{1\over N} \log\left [1+\{\nu_{\lambda_1(t,.)}^N\times \nu_{\lambda_2(t,.)}^N(A^{G_1,G_2,\delta}_{N})\}^{-1} \right ] }.
\end{equation}  

Proof of \eqref{zero} relies on  the following steps.

\begin{propo}\label{b-entropy}
For each $t$ in $[0,T]$, there exists a function $A_N^t$ which converges to zero when $N$ goes to infinity and a constant $C$ such that
\begin{equation}\label{hydro-upperbound}
{1\over N}\, \Hh \, \left [ \mu^N_t| \nu_{\lambda_1(t,.)}^N\times \nu_{\lambda_2(t,.)}^N\right ]\leq A_N^t + {C\over N}\int_0^t\,  \Hh \,
\left [ \mu^N_s|\nu_{\lambda_1(s,.)}^N\times \nu_{\lambda_2(s,.)}^N\right ]ds.
\end{equation}
Using Varadhan theorem (Chen, 1992 page 286), we have for all profiles $\rho_1(.)$ and $\rho_2(.)$ and $\delta > 0$ 
\begin{equation}\label{zero3} \lim_{N\longrightarrow\infty}  \displaystyle {1\over N}\, \log \, \Big (\nu_{\rho_1(t,.)}^N\times \nu_{\rho_2(t,.)}^N\Big ) (A^{G_1,G_2,\delta}_{N,t}) \ < \ 0.
\end{equation}
\end{propo}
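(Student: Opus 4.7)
The plan is to follow the relative entropy method of Guo--Papanicolaou--Varadhan, adapted by Yau to time-dependent reference measures and used in this context by Mourragui (1996) and Perrut (2000). Let $f^N_t$ denote the density of $\mu^N_t$ with respect to $\nu^N_t := \nu^N_{\lambda_1(t,\cdot)}\times \nu^N_{\lambda_2(t,\cdot)}$, and $\psi^N_t$ the density of $\nu^N_t$ with respect to a fixed product Poisson reference measure. Yau's inequality yields
\begin{equation*}
\frac{d}{dt}\, \Hh\!\left[\mu^N_t\,|\,\nu^N_t\right] \ \leq \ \int \left( \frac{\Omega_N^{*}\,\psi^N_t}{\psi^N_t} \ - \ \partial_t \log \psi^N_t \right) d\mu^N_t,
\end{equation*}
where $\Omega_N^*$ is the adjoint of $\Omega_N = \Omega^{\mathcal R}_N + N^2 \Omega^{\mathcal D}_N$. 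My first task would be to compute this integrand explicitly using the change of variables formulas \eqref{cv}, which convert the adjoint action of each birth, death, migration, recovery and infection term into a local function of $(\eta(x),\xi(x))$ and of $(\lambda_1(t,x/N),\lambda_2(t,x/N))$.

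Next, I would collect the resulting terms by order. After two discrete summations by parts and a Taylor expansion of $\lambda_i(t,\cdot)$ at scale $1/N$, the diffusion contribution $N^2 \Omega^{\mathcal D,*}_N \psi^N_t / \psi^N_t$ produces $\tfrac12 \Delta\lambda_i(t,x/N)$ times the linear factor arising from $\eta(x)$ or $\xi(x)$, plus $o_N(1)$. The reaction contribution, again via \eqref{cv}, yields sums over $x$ whose $\nu^N_t$-expectations are precisely $\widetilde{\beta}_i - \widetilde\delta_i \pm \widetilde g$ evaluated at $(\lambda_1(t,x/N),\lambda_2(t,x/N))$. Meanwhile $-\partial_t \log \psi^N_t$ expands into a sum of the form $-\sum_x\{\partial_t \lambda_i(t,x/N) - \eta(x)\,\partial_t\log\lambda_1(t,x/N)\}$ and its $\xi$-analogue. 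Using that $(\lambda_1,\lambda_2)$ solves \eqref{hydro-PDE}, the linear-in-$(\eta,\xi)$ pieces cancel exactly, and what remains inside the integral is a sum of local functions $h_{t,x/N}(\eta(y),\xi(y);\,y\text{ near }x)$ whose expectation under $\nu^N_{\lambda_1(t,x/N),\lambda_2(t,x/N)}$ vanishes.

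To close the Gronwall loop, I would apply the one-block estimate (Proposition \ref{oneblock}) to replace each such local function by its image under $\widetilde{\cdot}$ at the empirical averages $(\eta^k(x),\xi^k(x))$, at the cost of an $o_N(1)$ error that supplies the bulk of $A_N^t$. The remaining integrand is a function of $(\eta^k,\xi^k)$ that vanishes at the macroscopic profile; applying the entropy inequality \eqref{intro-entropique} with $\alpha$ of order $N$ bounds its $\mu^N_t$-integral by $C\cdot \Hh[\mu^N_t|\nu^N_t]/N$ plus a logarithmic exponential moment under $\nu^N_t$, which is itself $o_N(1)$ by Varadhan's lemma applied to sums of independent Poisson variables. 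Integrating from $0$ to $t$ and using hypothesis \eqref{ci} on the initial entropy yields \eqref{hydro-upperbound}. The large-deviation estimate \eqref{zero3} is then a direct application of Varadhan's theorem for the empirical measure of independent Poisson fields with slowly varying parameters, as in Chen (1992, p.~286).

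The main obstacle is the treatment of the non-smooth reaction terms, specifically the indicators $\ind_{\{\eta(x)>0\}}$, $\ind_{\{\xi(x)=0\}}$ and $\ind_{\{\xi(x)>0\}}$ appearing in the infection mechanism and in the function $g$ defined in \eqref{hydro-function}: these cannot be Taylor-expanded around the macroscopic profile, so the passage from microscopic to macroscopic quantities must be routed entirely through the one-block estimate. One must then verify carefully that the Poisson averages $\widetilde g, \widetilde\beta_i, \widetilde\delta_i$ really do coincide with the reaction nonlinearities in \eqref{hydro-PDE}, paying particular attention to the coupled contribution of the neighboring $\xi(y)$, $y\sim x$, in the outside infection rates $\lambda$ and $\beta$.
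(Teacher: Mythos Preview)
Your overall architecture matches the paper's: differentiate the relative entropy, bound it by $\int (\psi^N_t)^{-1}\Omega_N\psi^N_t\,d\mu^N_t - \int \partial_t\log\psi^N_t\,d\mu^N_t$ via the elementary inequality $x(\log y-\log x)\le y-x$, extract $\tfrac12\Delta\lambda_i$ from the diffusion part by Taylor expansion, use that $(\lambda_1,\lambda_2)$ solves \eqref{hydro-PDE} to kill the linear-in-$(\eta,\xi)$ pieces, then close via the one-block estimate, Lemma~\ref{Anne} (which packages exactly the entropy-inequality-plus-Varadhan step you describe), and Gronwall.

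However, you have misidentified the main obstacle. The indicators in $g$ are harmless: they are bounded, and passing to $\widetilde g$ under Poisson measures is routine. The real difficulty, which your proposal does not address, is that the birth and death rates $\beta_i,\delta_i$ in \eqref{Taux} are \emph{unbounded polynomials} (the $\delta_i$ have degree four). Proposition~\ref{oneblock} is stated only for bounded $h$, and Lemma~\ref{Anne} likewise requires bounded $h$ for the exponential moment under $\nu^N_t$ to be finite; applying either directly to $\delta_i$ or to $\varphi_i(\eta,\xi)=\eta\,\beta_1(\eta-1,\xi)$, etc., is illegitimate. The paper therefore truncates every reaction term by $\ind_{\{\eta(i)\le M,\,\xi(i)\le M\}}$ and collects the remainders into an error $F(M,N,T)$. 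Showing that $\lim_{M\to\infty}\limsup_{N\to\infty}F(M,N,T)=0$ is a separate, nontrivial step (Lemma~\ref{phi}): it uses a martingale for $\sum_x\eta_t(x)$ together with the structural fact that $\varphi/\delta_i\to 0$ at infinity, which holds precisely because the death rates dominate the other rates polynomially. Without this truncation-plus-martingale argument your Gronwall loop does not close, since neither the one-block replacement nor the log-moment bound is available for the raw rates.
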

By using \eqref{hydro-upperbound}  and applying Gronwall lemma we then prove that:
\begin{equation}\label{zero1}
\lim_{N\longrightarrow +\infty} \displaystyle {1\over N}\,  \Hh\,  \left [ \mu^N_t| \nu_{\lambda_1(t,.)}^N\times \nu_{\lambda_2(t,.)}^N\right ]= 0.
\end{equation}
Inequality \eqref {hydro-zero2},  \eqref{zero1} and \eqref{zero3} imply \eqref{zero}. \\

\indent In the proof of the proposition \ref{b-entropy}, we will need to write $\lambda_1(t,.)^{-1}$ and $\lambda_2(t,.)^{-2}$, and to avoid technical difficulties,  we  assume that $(\lambda_1(t,.),\lambda_2(t,.))$ the solution of \eqref{hydro-PDE} is bounded below by a strictly positive constant $K$:
$$\inf_{t\geq 0} \ \inf_{x\in\T_N} \lambda_i(t,x/N) \, = \, K , \quad  \hbox{for} \ i=1,2.$$
Indeed, if it is not the case, the proof may be modified by replacing  $\lambda_1(t,.)$ and $\lambda_2(t,.)$ by $\lambda_1(t,.)+ \varepsilon$ and $\lambda_2(t,.)+ \varepsilon$ ($\varepsilon>0$) and by letting $\varepsilon$ goes to zero.  In order to prove the proposition we need also to compute the relative entropy $\Hh \, \left [ \mu^N_t| \nu_{\lambda_1(t,.)}^N\times \nu_{\lambda_2(t,.)}^N\right ]$. 

\indent We denote by  $f^N$ and $f^N_t$ the Radon-Nikodym derivatives of $\mu^N$ and $\mu^N_t= \mu^NS^N_t$ with respect to the reference measure $(\nu_{\rho}^N\times\nu_{\rho}^N)$. Let $\psi^N_t$ denote the Radon-Nikodym derivative of $\nu^N_{\lambda_1(t,.)}\times \nu^N_{\lambda_2(t,.)}$ with respect to the reference measure. Because $\nu^N_{\lambda_1(t,.)}\times \nu^N_{\lambda_2(t,.)}$ and $\nu^N_{\rho}\times \nu^N_{\rho}$ 
are product measures, $\psi^N_t$ can be computed explicitly 
\begin{equation}\label{psi}
\begin{matrix}  \psi^N_t(\eta,\xi)=  & \exp\left (\displaystyle \sum_{i=0}^{N-1} \left \{ \eta(i) \log \left ({\lambda_1(t,i/N)\over \rho}\right ) + \rho  -\lambda_1(t,i/N)\right \} \right )\hfill\cr\cr
& \times \exp\left (\displaystyle \sum_{i=0}^{N-1}\left \{ \xi(i) \log \left ({\lambda_2(t,i/N)\over
\rho}\right )+\rho -\lambda_2(t,i/N)\right \} \right).
\end{matrix}
\end{equation}

\begin{proof} \ (Proposition \ref{b-entropy}) \
We  derivate  the relative entropy, using  that the density $f^N_t$ is the solution of the Kolmogorov forward equation $\partial_t f^N_t= \Omega_N^*f^N_t$. 
\begin{equation}\label{tropi}\begin{array}{rcll}\vspace{8pt}
\displaystyle {d\over dt} \, \Hh \, \left [ \mu^N_t| \nu_{\lambda_1(t,.)}^N\times \nu_{\lambda_2(t,.)}^N\right ]  &=& \displaystyle{d\over dt}\int f^N_t \, \log \Big ({f^N_t\over \psi^N_t}\Big )\,  d(\nu^N_{\rho}\times \nu^N_{\rho}) \\ \vspace{8pt}
& = & \displaystyle \int  f^N_t  \Omega_N \log\Big ({f^N_t\over \psi^N_t}\Big ) \, d (\nu^N_{\rho}\times \nu^N_{\rho}) -  \displaystyle \int  {f^N_t\over \psi^N_t}  \displaystyle {d\over dt}( \psi^N_t) \, d (\nu^N_{\rho}\times \nu^N_{\rho})  \\ \vspace{8pt}
& = & N^2 \displaystyle \int f^N_t\, \Omega_N\super{{\cal D},1} \log\Big({f^N_t\over \psi^N_t}\Big)\,  d (\nu^N_{\rho}\times \nu^N_{\rho}) \hfill\\ \vspace{8pt}
& & \hspace{3.3cm} + \displaystyle  N^2\, \int f^N_t\,  \Omega_N\super{{\cal D},2} \log\Big ({f^N_t\over \psi^N_t}\Big ) \, d(\nu^N_{\rho}\times \nu^N_{\rho})\hfill \\ \vspace{8pt}
&+&  \displaystyle \int f^N_t \Omega_N\super {\cal R} \log\Big ({f^N_t\over \psi^N_t}\Big )\, d(\nu^N_{\rho}\times \nu^N_{\rho}) 
- \displaystyle \int \displaystyle {f^N_t\over \psi^N_t}\displaystyle  {d\over dt}  (\psi^N_t) \, 
d( \nu^N_{\rho}\times \nu^N_{\rho}) \hfill\\ \vspace{8pt}
 & = &  \ I_1 \, + \,  I_2\, + \, I_3\, - \, I_4.
\end{array}
\end{equation}

\indent To compute $I_1$ and $I_2$ we use the explicit expression for $\psi^N_t$ given in \eqref{psi}, the fact that $\Omega_N\super {{\cal D},1}$ and $\Omega_N\super {{\cal D},2}$  are  self-adjoint with respect to the product measure $\nu^N_{\rho}\times \nu^N_{\rho}$, and    
\begin{equation} \label{elementary}
x\, \big [\log(y)-\log(x)\big ]\, \leq \, y-x, \quad \hbox{for all}\  x,y >0,
\end{equation}
\begin{equation}
\displaystyle \sum_{i=0}^{N-1} \big [\lambda_1(t,(i+ 1)/ N)+ \lambda_1(t,(i-1)/ N)-2\lambda_1(t,i/ N) 
\big ] = 0.
\end{equation} 
We obtain that 
$$\begin{array}{rcl}\vspace{3pt}
I_1 & \leq  & N^2\displaystyle \int \displaystyle {f^N_t\over \psi^N_t} \, \Omega_N\super{{\cal D},1}  (\psi^N_t)\,  d\,(\nu^N_{\rho}\times \nu^N_{\rho}) \\ \vspace{3pt}
& \leq &  \displaystyle  {N^2\over 2}\displaystyle \sum_{|j-i|=1}\int \eta(i) \left
({\lambda_1(t,j/N)\over \lambda_1(t, i/N)}
 -1 \right )\, d \, \mu^N_t(\eta,\xi)\\ \vspace{3pt}
& = & \hspace{-3pt}\displaystyle {N^2\over 2}\displaystyle \sum_{i=0}^{N-1}\int \left 
({\eta(i)\over \lambda_1(t,i/N)}-1 \right ) \hspace{-3pt}
\Big [\lambda_1(t,(i+1)/ N)+ \lambda_1(t,(i-1)/ N)  -2\lambda_1(t,i/ N)  \Big ]  d\, \mu^N_t(\eta,\xi) 
\end{array}$$
\pagebreak

To take advantage of the fact that   $(\lambda_1(t,.),\lambda_2(t,.))$ is  solution of \eqref{hydro-PDE} and to conjure up the Laplacian of $(\lambda_1(t,.),\lambda_2(t,.))$ that will appear later in $I_4$ with negative sign, we observe that a Taylor-Young expansion gives:
$$ N^2 \, \Big [\lambda_1(t,(i+1)/ N)+ \lambda_1(t,(i-1)/ N)  -2\lambda_1(t,i/ N)  \Big ] = {\partial^2\over \partial \theta^2}\lambda_1(t,i/ N) + o(1/N^2).$$ 
So,
$$\begin{array}{rcl}\vspace{3pt}
I_1 \ \leq  \ \displaystyle {1\over 2}\displaystyle \sum_{i=0}^{N-1}\int \left ({\eta (i)\over \lambda_1(t,i/N)}-1 \right )  {\partial^2\over \partial \theta^2}\lambda_1(t,i/ N) \, d\, \mu^N_t(\eta,\xi) \, + \, o(1/N). 
\end{array}$$
The second term $I_2$ has a similar upper bound:
$$ I_2 \  \leq \ \displaystyle {1\over 2}\displaystyle \sum_{i=0}^{N-1}\int \left ({\xi (i)\over \lambda_2(t,i/N)}-1 \right )  {\partial^2\over \partial \theta^2}\lambda_2(t,i/ N) \, d\, \mu^N_t(\eta,\xi) \, + \, o(1/N).$$ 
To deal with the third term $I_3$, we apply  inequality \eqref{elementary},  
and by using substitution rule \eqref{cv}  we get:
\begin{eqnarray*}\vspace{2pt}
I_3 & \leq & \displaystyle   \sum_{i=0}^{N-1} \int  \Big [ {\eta(i)\over \lambda_1(t,i/N)}\, \beta_1(\eta(i)-1, \xi(i)) - \beta_1(\eta(i),\xi(i)) \Big ] \,
d\mu^N_t(\eta,\xi) \\ \vspace{2pt}
& + &  \displaystyle   \sum_{i=0}^{N-1} \int \Big [ { \lambda_1(t,i/N)\over \eta(i) + 1}\, \delta_1(\eta(i)+1, \xi(i)) - \delta_1(\eta(i),\xi(i)) \Big ]\,
d\mu^N_t(\eta,\xi) \\ \vspace{2pt}
& + &  \displaystyle   \sum_{i=0}^{N-1} \int \Big [ {\xi(i)\over  \lambda_2(t,i/N)}\, \beta_2(\eta(i), \xi(i)-1) - \beta_2(\eta(i),\xi(i))  \Big ] \,
d\mu^N_t(\eta,\xi) \\ \vspace{2pt}
& + &  \displaystyle   \sum_{i=0}^{N-1} \int \Big [ { \lambda_1(t,i/N)\over \xi(i) + 1} \, \delta_2(\eta(i), \xi(i)+1) - \delta_2(\eta(i),\xi(i))\Big ]
d\mu^N_t(\eta,\xi)\\ \vspace{2pt}
& + & \phi  \displaystyle   \sum_{i=0}^{N-1} \int \Big [ { \lambda_1(t,i/N)\over \lambda_2(t,i/N)} \times {\xi(i)(\xi(i)-1)\over \eta(i)+1} - \xi(i)\ind_{\{\eta(i)>0\}} \Big ]\, d\mu^N_t(\eta,\xi)\\ \vspace{2pt}
& + &  \displaystyle   \sum_{i=0}^{N-1} \int \left (  { \lambda_1(t,i/N)\over \lambda_2(t, i/N) }\times  {\xi(i) \over \eta(i)+ 1}\ind_{\{ \xi(i)=1\}} - \ind_{\{\eta(i)>0,\, \xi(i)=0\}} \right )  \times \Big (\lambda\displaystyle \sum_{j\sim i}\xi(j) \Big ) \,
d\mu^N_t(\eta,\xi)\\ \vspace{2pt}
& + &  \displaystyle   \sum_{i=0}^{N-1} \int \left (  { \lambda_1(t,i/N)\over \lambda_2(t, i/N) }\times  {\xi(i) \over \eta(i)+ 1}- \ind_{\{\eta(i)>0,\, \xi(i)>0\}} \right )  \times \Big (\beta\displaystyle \sum_{j\sim i}\xi(j) \Big ) \,
d\mu^N_t(\eta,\xi)\\ \vspace{2pt}
& + &  \displaystyle   \sum_{i=0}^{N-1} \int \Big [  { \lambda_2(t,i/N)\over \lambda_1(t, i/N) }  \eta(i)  - \xi(i) \Big ] \, d\mu^N_t(\eta,\xi).
\end{eqnarray*}

We rewrite the fourth term $I_4$ using  that $(\lambda_1(t,.),\lambda_2(t,.))$ solves  equation \eqref{hydro-PDE}
\begin{eqnarray*}\vspace{4pt}
I_4 & = &  \displaystyle \sum_{i=0}^{N-1} \int \Big [  \left ({\eta(i)\over \lambda_1(t,i/ N)} -1\right ) {d\over dt } \lambda_1(t,i/N ) 
+ \left ({\xi(i)\over \lambda_2(t,i/ N)} -1\right ) {d\over dt } \lambda_2(t,i/ N )\Big ]\, d\mu^N_t(\eta,\xi) \\ \vspace{4pt}
& = &  \displaystyle \sum_{i=0}^{N-1}\int  \left ({\eta(i)\over \lambda_1(t,i/ N)} -1\right ) \left (\displaystyle {1\over 2} 
{\partial^2\over \partial \theta^2 }\lambda_1(t,i/ N) +
 {\widetilde \beta}_1(\lambda_1(t,i/ N,\lambda_2(t,i/N))\, -\right. \\ \vspace{4pt}
   &  &  \hspace{3cm}  {\widetilde \delta}_1(\lambda_1(t,i/ N), \lambda_2(t,i/ N)) 
+\left.  {\widetilde g}(\lambda_1(t,i/ N),\lambda_2(t,i/ N))\right ) \, d\mu^N_t(\eta,\xi)
\\ \vspace{4pt}
& +&   \displaystyle \sum_{i=0}^{N-1} \int \left ({\xi(i)\over
  \lambda_2(t,i/ N)} -1\right ) \Big ( \displaystyle
 {1\over 2} {\partial^2\over \partial \theta^2 }\lambda_2(t,i/ N) + 
{\widetilde \beta}_2(\lambda_1(t,i/ N),\lambda_2(t,i/ N)) \, - \\ \vspace{4pt}
  &  &  \hspace{3cm}    {\widetilde \delta}_2(\lambda_1(t,i/N), \lambda_2(t,i/ N)) 
    -  {\widetilde g}(\lambda_1(t,i/ N),\lambda_2(t,i/ N)) \Big )\, d\mu^N_t(\eta,\xi).
\end{eqnarray*}
Since the birth and death rates are not bounded, we have to truncate them with
indicators of sets like $A_M=\{\eta(i) + \xi(i) \leq M\}$. To control terms 
with $\{\eta(i) \geq M\}$ or $\{ \xi(i) \geq M\}$, we need  the 
\begin{lemma}\label{phi}
 Let $\varphi$ be a function on $\N\times \N$ such that
\begin{equation}\label{hydro-phicond}
\lim_{k_1\rightarrow +\infty} {\varphi(k_1,k_2)\over \delta_1(k_1,k_2)}\, =\,  0. 
\end{equation}
Then,
\begin{equation}\label{hydro-limzero}\lim_{M\rightarrow\infty}\,\limsup_{N\rightarrow\infty}\,{1\over N}\, \sum_{x=0}^{N-1}\int_0^T \int \varphi(\eta(x),\xi(x))\, \ind_{\{\eta(x)\, >\,  M\}}\, f^N_s(\eta,\xi)\, d\, (\nu_{\rho}^N\times \nu_{\rho}^N ) \,(\eta,\xi)\, ds \, = \,  0.  
\end{equation}
Let $\varphi$ be a function on $\N\times \N$ such that $\displaystyle \lim_{k_2\rightarrow +\infty} {\varphi(k_1,k_2)\over \delta_2(k_1,k_2)} = 0$, then 
\begin{equation}\label{hydro-limzero1}
\displaystyle \lim_{M\rightarrow\infty}\,\limsup_{N\rightarrow\infty}\,{1\over N}\, \sum_{x=0}^{N-1}\int_0^T \int \varphi(\eta(x),\xi(x))\, \ind_{\{\xi(x)\, >\,  M\}}\, f^N_s(\eta,\xi)\, d\, (\nu_{\rho}^N\times \nu_{\rho}^N) \,(\eta,\xi)\, ds  =   0. 
\end{equation}
\end{lemma}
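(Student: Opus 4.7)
\noindent\textbf{Proof proposal for Lemma \ref{phi}.}
The two statements are symmetric under the exchange $(\eta,\delta_1)\leftrightarrow(\xi,\delta_2)$, so I focus on \eqref{hydro-limzero}. The plan is to combine a truncation of $\varphi$ by $\delta_1$ with a uniform-in-$N$ bound on the time--space average of $\delta_1$, made available by the super-linear growth of the death rates in \eqref{Taux}.

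From hypothesis \eqref{hydro-phicond}, for each $\varepsilon>0$ there is $M_0$ such that $\varphi(k_1,k_2)\leq \varepsilon\,\delta_1(k_1,k_2)$ whenever $k_1>M_0$; here I read the convergence as uniform in $k_2$, which is natural for the specific $\varphi$ to which the lemma is actually applied (polynomials whose powers are dominated by those of $\delta_1=\kappa\eta^2(\eta+\xi^2)$). Consequently, for $M\geq M_0$,
\begin{equation*}
\frac{1}{N}\sum_{x=0}^{N-1}\int_0^T\!\int \varphi(\eta(x),\xi(x))\,\ind_{\{\eta(x)>M\}}\,f^N_s\,d(\nu^N_\rho\times\nu^N_\rho)\,ds\ \leq\ \varepsilon\,\mathcal{I}_N(T),
\end{equation*}
where $\mathcal{I}_N(T):=\frac{1}{N}\sum_x\int_0^T\!\int \delta_1(\eta(x),\xi(x))\,d\mu^N_s(\eta,\xi)\,ds$. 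Passing to the limit $N\to\infty$ at fixed $M$ and then $M\to\infty$ (so that $\varepsilon\to 0$), the lemma follows once we know $\limsup_N\mathcal{I}_N(T)<\infty$.

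To prove this a priori bound I apply Dynkin's formula to the total-mass functional $F(\eta,\xi)=\sum_{x\in\T_N}(\eta(x)+\xi(x))$. The diffusion part of the generator conserves the number of particles of each species, and each reaction transition that only swaps one healthy individual with one infected one (recovery, within-infection, outside infections) leaves $F$ unchanged; only the birth and death rates contribute, yielding $\Omega_N F=\sum_x(\beta_1+\beta_2-\delta_1-\delta_2)$. Using \eqref{Taux} together with Young's inequality, for some constant $C_0$ one has
$(\alpha_1+\alpha_2)(k_1+k_2)\leq \tfrac12(\delta_1(k_1,k_2)+\delta_2(k_1,k_2))+C_0$ for every $k_1,k_2\in\N$, hence Dynkin's identity between $0$ and $T$ gives
\begin{equation*}
\frac12\int_0^T\!\int \sum_x(\delta_1+\delta_2)\,d\mu^N_s\,ds \ \leq\ \int F\,d\mu^N_0+C_0\,N\,T.
\end{equation*}
The initial entropy assumption \eqref{ci} together with the entropy inequality \eqref{intro-entropique} applied to the linear functional $F$ and the reference measure $\nu^N_{m_1(\cdot)}\times\nu^N_{m_2(\cdot)}$ (whose Poisson structure makes $\log\int e^{\alpha F}\,d(\nu^N_{m_1(\cdot)}\times\nu^N_{m_2(\cdot)})$ of order $N$ for any fixed $\alpha>0$, since the profiles $m_1,m_2$ are bounded) then delivers $\int F\,d\mu^N_0=O(N)$. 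Combining these two bounds yields $\mathcal{I}_N(T)=O(1)$ uniformly in $N$, as required.

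The main obstacle is the uniformity of \eqref{hydro-phicond} in $k_2$: if $\varphi/\delta_1$ is not uniformly small, I would split the state space according to whether $\xi(x)\leq L$ or $\xi(x)>L$ and treat the second piece with the analogous bound involving $\delta_2$, whose time--space average is controlled by the very same Dynkin computation above; sending $L\to\infty$ after $M\to\infty$ then recovers the announced limit. Apart from this bookkeeping, the argument is the standard interplay between truncation of unbounded reaction rates and Dynkin-type a priori bounds, in the spirit of Mourragui (1996).
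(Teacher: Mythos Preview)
Your proof is correct and follows the same overall strategy as the paper: dominate $\varphi\,\ind_{\{\eta>M\}}$ by $\varepsilon$ times a quantity whose time--space average is controlled through a Dynkin/martingale identity for a mass functional, with the entropy hypothesis on $\mu^N_0$ used to bound the initial mass. The one substantive difference is the choice of that functional. The paper applies the martingale to $\sum_x\eta(x)$ alone, so that $\Omega_N\super{\cal R}$ produces the drift $\beta_1-\delta_1+g$, and the infection/recovery term $g$ (which couples neighboring sites) must then be absorbed into $\delta_1$ via an extra elementary inequality; because $g$ enters with the opposite sign in the $\xi$--balance, the paper is forced into a separate, asymmetric argument for \eqref{hydro-limzero1}. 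Your choice of the \emph{total} mass $\sum_x(\eta(x)+\xi(x))$ eliminates $g$ outright, since infections and recoveries conserve the total particle count, leaving the cleaner drift $\sum_x(\beta_1+\beta_2-\delta_1-\delta_2)$ and giving a genuinely symmetric treatment of the two statements. Your remark on the needed uniformity of \eqref{hydro-phicond} in $k_2$ is apt; the paper tacitly assumes the same thing, and your suggested cure (an extra cutoff in $\xi$ handled by the companion bound on $\int\!\int\delta_2$, which your Dynkin computation already provides) is exactly right.
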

{\scshape Proof.}\hspace{2mm}  We will use a martingale argument. By \eqref{hydro-phicond}, for all $\varepsilon > 0$ 
there exists $M_1\in\N$ such that, for all $M\geq M_1$
$$\varphi(\eta(x),\xi(x))\, \ind_{\{\eta(x)\, > \, M\}} \,\leq \, {\varepsilon\over 2}\, \delta_1(\eta(x),\xi(x))\, \ind_{\{\eta(x)\, > \, M\}}. $$ 
Moreover, by the explicit formulas for $\beta_1$ and $\delta_1$, given in \eqref{Taux},  
it follows that there exists $C>0$ such that for $\varepsilon >0$ and $M>M_1$
\begin{equation}\label{hydro-boundphi}
\varphi(\eta(x),\xi(x))\, \ind_{\{\eta(x)\, > \, M\}} \,\leq \, \varepsilon\,\Big[ \delta_1(\eta(x),\xi(x)) - \beta_1(\eta(x),\xi(x)) - g(\eta(x),\xi(x)) + C\Big ]. 
\end{equation}
We have the following centered martingale with respect to the  filtration 
${\cal F}_t=\sigma\{(\eta_s,\xi_s); s\leq t\}$ 
\begin{eqnarray*}
M^N_t & = & \displaystyle \sum_{x=0}^{N-1}\eta_t(x) - \sum_{x=0}^{N-1}\eta_0(x)- \int_0^t \Omega_N\super {\cal R}\left (\sum_{x=0}^{N-1} \eta_s(x)\right )\, ds\\
& = &  \displaystyle \sum_{x=0}^{N-1}\eta_t(x)- \sum_{x=0}^{N-1}\eta_0 (x)\\
&+& \displaystyle\sum_{x=0}^{N-1} \int_0^t\Big [ \delta_1 (\eta_s(x),\xi_s(x))- \beta_1(\eta_s(x),\xi_s(x)) - g(\eta_s(x),\xi_s(x))\Big ] ds.  
 \end{eqnarray*}
Because $M^N_t$ is centered, by the entropy inequality \eqref{intro-entropique}  and 
by \eqref{hydro-boundphi},  we obtain  for $M$ large enough and  $\varepsilon$ small:
\begin{eqnarray*} 
& &   {1\over N}\, \sum_{x=0}^{N-1}\int_0^t \int \varphi(\eta(x),\xi(x))\, \ind_{\{\eta(x)\, >\,  M\}}\, f^N_s(\eta,\xi)\, d\, (\nu_{\rho}^N\times \nu_{\rho}^N ) \,(\eta,\xi)\, ds  \hfill \cr
& & \leq  {\varepsilon \over N} \sum_{x=0}^{N-1} \int_0^t \int \Big [\,  \delta_1(\eta(x),\xi(x)) - \beta_1(\eta(x),\xi(x))- g(\eta(x),\xi(x)) + C \, \Big ] \,  d\mu^N_s (\eta,\xi)\,  ds. \hfill \cr
& & \leq  t\, \varepsilon \displaystyle {1\over N}\sum_{x=0}^{N-1}\left(C- \int \eta(x) f^N_t(\eta,\xi)\, d(\nu^N_{\rho}\times \nu^N_{\rho})(\eta,\xi) + \int \eta(x) f^N(\eta,\xi) \, d( \nu^N_{\rho}\times \nu^N_{\rho})(\eta,\xi)\right)\hfill\cr
& &\leq  \varepsilon\,  C_t,\hfill
\end{eqnarray*}
therefore \eqref{hydro-limzero} follows. 

\indent The computation to prove \eqref{hydro-limzero1} is quite different; for all $\varepsilon > 0$ we have  
\begin{eqnarray*} 
& &   \varepsilon  \sum_{x=0}^{N-1} \Big [\beta_2(\eta(x),\xi(x)) + \lambda\, \ind_{\{\eta(x) >  0, \xi(x) = 0\}}\, \Big (\sum_{|y-x|=1} \xi(y) \Big ) + \phi \ind_{\{\eta(x) > M\}}\, \xi(x)\Big ] \hfill \cr
& & \leq  {\varepsilon\over 2}\, \sum_{x=0}^{N-1} \Big [ \delta_2(\eta(x),\xi(x)) + 2 \, C\Big ],\hfill
\end{eqnarray*}
it follows that there exists $C> 0$ such that for all $\varepsilon > 0$, 
$$\sum_{x=0}^{N-1} \Big [{\varepsilon\over 2}\, \delta_2(\eta(x),\xi(x)) - \varepsilon \, \beta_2(\eta(x),\xi(x)) + \varepsilon \, g(\eta(x),\xi(x)) + \varepsilon\, C\Big ] > 0.$$
Finally we obtain the result by the following inequality
$$ \sum_{x=0}^{N-1} \, \varphi(\eta(x),\xi(x))\ind_{\{\xi(x) > M\}} \leq \varepsilon\, 
\sum_{x=0}^{N-1} \Big[ \delta_2(\eta(x),\xi(x)) -\beta_2(\eta(x),\xi(x)) -g(\eta(x),\xi(x))+ C\Big ].\quad \square$$

\indent Let us now integrate \eqref {tropi}, putting things together, and removing the negative terms:
 \begin{equation}\label{hydro-uperbound}\begin{array}{lll}
\displaystyle  {1\over N} \Hh \, \left  [ \mu^N_t|\nu_{\lambda_1(t,.)}^N\times \nu_{\lambda_2(t,.)}^N\right]  & \leq &   
\displaystyle {1\over N}\, \Hh\, \left [ \mu^N| \nu_{m_1(.)}^N\times \nu_{m_2(.)}^N\right ] + F(M,N,T) + o \Big ( {1\over N} \Big ) \\ \\
&& + \displaystyle {1\over N} \displaystyle \sum_{i=0}^{N-1}\int_0^t \int 
\Big \{ \sum_{i=1}^7 T_i \Big\} d\, \mu_s^N(\eta,\xi)\, ds.
\end{array}\end{equation}
In order to simplify the expression of $T_i, \, (i=1\ldots,7)$,  we set:
$$\beta_{1,M}(\eta(i),\xi(i))  \,=\,    \beta_1(\eta(i),\xi(i))\, 
\ind_{\{\eta(i)\, \leq \, M,\,  \xi(i)\, \leq \, M\}}$$ 
$$\varphi_{1,M}(\eta(i), \xi(i)) \, = \,   \eta(i)\, {\beta}_1(\eta(i)-1,\xi(i))\, 
\ind_{\{ \eta (i)\, \leq \, M+1,\,  \xi(i)\, \leq \, M \}},$$
$$\begin{array}{lll}
T_1 & =& \displaystyle {1\over \lambda_1(s,i/ N) }\,  \varphi_{1,M}(\eta(i), \xi(i))- \beta_{1,M}(\eta(i),\xi(i))-\\ \vspace{5pt}
& & \hspace{5cm}\left ( \displaystyle{\eta(i)\over \lambda_1(s,i/ N)}-1 \right ) 
{\widetilde \beta}_{1,M}(\lambda_1(s,i/  N),\lambda_2(s,i/ N)).
\end{array}$$

$$\delta_{1,M}(\eta(i),\xi(i)) \, = \, \delta_1(\eta(i),\xi(i))\, \ind_{\{\eta(i)\, \leq \,  M, \, \xi(i)\, \leq\,  M\}}$$ 
$$ \phi_{1,M}(\eta(i), \xi(i)) \, = \, {1\over \eta(i)+1} \, {\delta}_1(\eta(i)+1,\xi(i))\, 
\ind_{\{\eta(i)\leq M-1, \, \xi(i)\, \leq \, M\}},$$
and 
$$\begin{array}{lll}
T_2 &=& \lambda_1(s,i/ N) \phi_{1,M}(\eta(i),\xi(i))-\delta_{1,M}(\eta(i),\xi(i))+\\ \vspace{5pt}
& & \hspace{5cm} \left( \displaystyle{\eta(i)\over\lambda_1(s,i/  N)}-1 \right)
{\widetilde \delta}_{1,M}(\lambda_1(s,i/  N),\lambda_2(s,i/ N)),
\end{array}$$

$$\beta_{2,M}(\eta(i),\xi(i))  \,=\,    \beta_2(\eta(i),\xi(i))\,
 \ind_{\{\eta(i)\, \leq \, M,\,  \xi(i)\, \leq \, M\}}$$ 
$$\varphi_{2,M}(\eta(i), \xi(i)) \, = \,   \xi(i)\, {\beta}_2(\eta(i),\xi(i)-1)\, 
\ind_{\{ \eta (i)\, \leq \, M,\,  \xi(i)\, \leq \, M +1\}},$$
and 
\pagebreak

$$\begin{array}{lll}
T_3&=&   \,  \displaystyle{1\over \lambda_2(s,i/ N) }\,  \varphi_{2,M}(\eta(i), \xi(i))- \beta_{2,M}(\eta(i),\xi(i))- \\ \vspace{5pt}
& & \hspace{5.5cm} \left ( \displaystyle{\xi(i)\over \lambda_2(s,i/ N)}-1 \right ) {\widetilde \beta}_2(\lambda_1(s,i/  N),\lambda_2(s,i/ N)),
\end{array}$$

$$\delta_{2,M}(\eta(i),\xi(i)) \, = \, \delta_2(\eta(i),\xi(i))\, \ind_{\{\eta(i)\, \leq \,  M, \, \xi(i)\, \leq\,  M\}}$$
$$\phi_{2,M}(\eta(i), \xi(i)) \, = \, {1\over \xi(i)+1} \, {\delta}_2(\eta(i),\xi(i)+1)\, 
\ind_{\{\eta(i)\leq M, \, \xi(i)\, \leq \, M-1\}},$$
and 
$$\begin{array}{lll}
T_4 &=&   \,  \lambda_2(s,i/ N)\,  \phi_{2,M}(\eta(i), \xi(i))- \delta_{2,M}(\eta(i),\xi(i))+ 
\\ \vspace{5pt}
&& \hspace{5.5cm} \left ( \displaystyle{\xi(i)\over \lambda_2(s,i/ N)}-1 \right ) {\widetilde \delta}_2
(\lambda_1(s,i/ N), \lambda_2(s,i/ N)),
\end{array}$$

$$e_{1,M}(\eta(i),\xi(i))\, = \,  \phi \, \xi(i) \ind_{\{\eta(i)\, >\, 0, \, \xi(i)\, \leq \, M\}}$$ 
$$E_{1,M}(\eta(i), \xi(i)) \, = \, \phi\,  \displaystyle {\xi(i)\, (\xi(i)-1)\over \eta(i)+1} \, 
\ind_{\{ \xi(i)\, \leq\,  M +1\}},$$
and
$$\begin{array}{lll}\vspace{3pt}
T_5 &=&  \,  \displaystyle{\lambda_1(s,i/ N)\over \lambda_2(s,i/ N)} \, E_{1,M}(\eta(i), \xi(i))- 
e_{1,M}(\eta(i),\xi(i)) + 
\left (  \displaystyle {\eta(i)\over \lambda_1(s, i/ N)} -  \displaystyle{\xi(i)\over \lambda_2(s, i/ N)} \right ) \times \\ \vspace{5pt}
& & \hspace{8cm} {\tilde e}_{1,M}(\lambda_1(s,i/ N),\lambda_2(s,i/ N)),
\end{array}$$

$$e_{2,M}(\eta(i),\xi(i))  = \lambda\,  \ind_{\{\eta(i) >  0, \, \xi(i)=0\}} 
 \Big (\sum_{j\in \T_N} p(j,i)\, \xi(j)\ind_{\{\xi(i) \leq  M\}}\Big ), $$
$$E_{2,M}(\eta(i), \xi(i)) =    {\lambda \ind_{ \{\xi(i) =  1\}} \over \eta(i)+1}
 \Big (\sum_{j\in\T_N} p(j,i)\, \xi(j) \ind_{\{\xi(j) \leq  M\}}\Big),$$
and 
$$\begin{array}{lll}\vspace{3pt}
T_6&=&  \,  \displaystyle {\lambda_1(s,i/ N)\over \lambda_2(s, i/ N) } \, E_{2,M}(\eta(i), \xi(i))- e_{2,M}(\eta(i),\xi(i)) + \left ( \displaystyle {\eta(i)\over \lambda_1(s, i/ N)} - {\xi(i)\over \lambda_2(s, i/ N)} \right ) \times  \\ \vspace{5pt}
& & \hspace{8cm} {\tilde e}_{2,M}(\lambda_1(s,i/ N),\lambda_2(s,i/ N)).
\end{array}$$
$$r_M(\eta(i),\xi(i)) \, = \,  \xi(i) \, \ind_{\{\xi(i)\, \leq \, M, \, \eta(i)\, \leq\,  M\}}$$
$$R_{M}(\eta(i), \xi(i)) \, = \, \eta(i)  \, \ind_{\{ \xi(i)\, \leq \, M-1,\,  \eta(i)\, \leq \, M+1\}},
$$
and 
$$\begin{array}{lll}\vspace{3pt}
T_7 &=&  \,  \displaystyle{\lambda_2(s,i/ N)\over \lambda_1(s, i/ N) } \, R_M(\eta(i), \xi(i))- r_{M}(\eta(i),\xi(i)) -
\left ( \displaystyle{\eta(i)\over \lambda_1(s, i/ N)} -  \displaystyle{\xi(i)\over \lambda_2(s, i/ N)} \right )\times 
\\ \vspace{5pt} 
&& \hspace{6cm} {\tilde r}_{M}(\lambda_1(s,i/ N),\lambda_2(s,i/ N)),
\end{array}$$ 
Then $F(M,N,T)$ contains all terms with $\ind_{\{\eta(x) \, > \, M\}}$ and 
$\ind_{\{\xi(x) \, > \, M\}}$:
\begin{eqnarray*} \vspace{5pt}
& & F(M,N,T)  =   \displaystyle {1\over N} \sum_{i=0}^{N-1}\int_0^T \int 
f^N_s(\eta,\xi) \,\Big [  \\ \\ \vspace{5pt}
& & \Big ( {1\over \lambda_1(s,i/ N)}\, \varphi_1(\eta(i), \xi(i)) + \lambda_1(s,i/ N)\, 
\phi_1(\eta(i), \xi(i))\Big ) \times \Big (\ind_{\{\eta(i)\geq M\}} + \ind_{\{\xi(i)\geq M\}}\Big )  
\\ \\ \vspace{5pt}
&   + & \, \Big ({1\over \lambda_2(s,i/ N)}\, \varphi_2(\eta(i), \xi(i)) +  \lambda_2(s,i/ N)\, 
\phi_2(\eta(i), \xi(i))\Big ) \times \Big (\ind_{\{\eta(i)\geq M\}} + \ind_{\{\xi(i)\geq M\}}\Big ) 
 \\ \\ \vspace{5pt}
&  + & \, {\lambda_2(s,i/ N)\over \lambda_1(s, i/ N) } \, \eta(i) \, 
\Big (\ind_{\{\eta(i)\geq M\}} + \ind_{\{\xi(i)\geq M\}}\Big ) + {\eta(i)\over \lambda_1(s,i/ N)} \,{\tilde {\tilde {\delta}}}_{1,M}(\lambda_1(s,i/ N), \lambda_2(s, i/ N))\\  \\ \vspace{10pt}
&  + & \,  \phi\, { \lambda_1(s,i/ N)\over \lambda_2(s, i/ N) } \, { \xi(i)(\xi(i)-1)\over \eta(i)+1}\, \ind_{\{\xi(i)\geq M\}} + {\xi(i)\over \lambda_2(s,i/ N)} \,{\tilde {\tilde {\delta}}}_{2,M}(\lambda_1(s,i/ N), \lambda_2(s, i/ N)) \\ \\ \vspace{10pt} 
 &   + &  \, {\lambda_1(s,i/ N)\over \lambda_2(s, i/ N)}\, 
{\lambda\over \eta(i)+1}\ind_{\{\xi(i)=1\}} \Big(\sum_{|j-i|=1}\xi(j)\, \ind_{\{\xi(j)\, > \, M \}}\Big)
 +  \\ \\ \vspace{5pt}
&& \hspace{7cm}{\eta(i)\over \lambda_1(s,i/ N)} \,{\tilde {\tilde {r}}}_{M}(\lambda_1(s,i/ N), \lambda_2(s, i/ N))
 \\ \\ \vspace{5pt} 
&  + & \,  {\tilde {\tilde {\beta}}}_{1,M}(\lambda_1(s,i/ N), \lambda_2(s, i/ N)) 
+ {\tilde {\tilde {\beta}}}_{2,M}(\lambda_1(s,i/ N), \lambda_2(s, i/ N)) \\ \\ \vspace{5pt}
&  + & \,  {\xi(i)\over \lambda_2(s,i/ N)}\Big ( {\tilde {\tilde {e}}}_{1,M}(\lambda_1(s,i/ N), \lambda_2(s, i/ N)) + \\ \\ \vspace{5pt}
&& \hspace{5cm} {\tilde {\tilde {e}}}_{2,M}(\lambda_1(s,i/ N), \lambda_2(s, i/ N))\Big )
\Big ] \, d\,(\nu^N_{\rho}\times \nu^N_{\rho}) (\eta,\xi)\, ds, 
\end{eqnarray*}
and for $k=1,2$ 
\begin{eqnarray*}\vspace{3pt}
{\tilde {\tilde {\beta}}}_{k,M} (a_1,a_2) & = & \int \Big [ {\beta}_k(\eta,\xi)- {\beta}_{k,M}(\eta,\xi)\Big ]\, d (\nu^N_{a_1}\times \nu^N_{a_2})(\eta,\xi)\\ \vspace{3pt}
{\tilde {\tilde {\delta}}}_{k,M} (a_1,a_2) & = & \int \Big [ {\delta}_k(\eta,\xi)- {\delta}_{k,M}(\eta,\xi)\Big ]\, d (\nu^N_{a_1}\times \nu^N_{a_2})(\eta,\xi)\\ \vspace{3pt}
{\tilde {\tilde {e}}}_{k,M} (a_1,a_2) & = & \int \Big [ {e}_k(\eta,\xi)- {e}_{k,M}(\eta,\xi)\Big ]\, d (\nu^N_{a_1}\times \nu^N_{a_2})(\eta,\xi)\\ \vspace{3pt}
{\tilde {\tilde {r}}}_{M} (a_1,a_2) & = & \int \Big [ {r}(\eta,\xi)- {r}_{M}(\eta,\xi)\Big ]\, d (\nu^N_{a_1}\times \nu^N_{a_2})(\eta,\xi).
\end{eqnarray*}
To control the term $F(M,N,T)$ we use  lemma \ref{phi} to obtain
\begin{equation}
\label{hydro-FMNT}\lim_{M\rightarrow +\infty} \limsup_{N\rightarrow +\infty} F(M,N,T)\, =\, 0.
\end{equation}
For the rest of the paper we need the following result due to Perrut (2000). For each  
bounded function $h$ on $\N\times \N$ and  for all $x_1,x_2, y_1, y_2$ in $\R^+$, we set 
\begin{equation}\label{Gamma}
(\Gamma h)(x_1,x_2,y_1,y_2)=  \widetilde{h}(x_1,x_2)- \widetilde{h}(y_1,y_2)- {d \widetilde{h}\over dx_1}(y_1,y_2)\, (x_1-y_1) -{d \widetilde{h}\over dx_2}(y_1,y_2)\, (x_2-y_2) 
\end{equation}
\pagebreak

\begin{lemma} \ (Perrut,\ 2000)\,  \label{Anne}
Let $h(.,.)$ be a bounded function on $\N\times \N$,
 $\rho_1(.)$ and $\rho_2(.)$ be two positive bounded functions on
 $[\,0,1]$  and $J$ be a continuous function on $\R^2$. Then
 there exists $\gamma_0>0$, such that, for all $\gamma\leq \gamma_0$,
\begin{eqnarray*}
&& \displaystyle {1\over N}\sum_{i=0}^{N-1}\int J\left (\rho_1(i/ N), 
\rho_2(i/ N)\right )\, (\Gamma h)\,  \left (\eta^k(i), \xi^k(i), \rho_1(i/ N), \rho_2(i/ N)) \right  )  \times    \hfill  \\ \vspace{3pt}
&&  \hspace{4cm}  \displaystyle  f^N_t(\eta,\xi) \, d (\nu^N_{\rho} \times  \nu^N_{\rho}) (\eta,\xi) 
\leq  {1\over \gamma N} \,  \Hh\, \left [\mu^N_t| \nu^N_{\rho_1(.)}\times \nu^N_{\rho_2(.)}\right ] 
+  R_N^t(k,\gamma),
\end{eqnarray*}
with $\displaystyle \limsup_{k \to \infty}\limsup_{N\to \infty} R_N^t(k,\gamma)\leq 0.$
\end{lemma}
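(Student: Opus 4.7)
The lemma is a replacement estimate of Yau-type adapted to the Poisson reference setting, and the plan is to apply the entropy inequality to trade $\mu^N_t$ for the product Poisson reference measure, then exploit the second-order Taylor structure built into $\Gamma h$ together with the concentration of local densities under the Poisson reference to bound the resulting logarithmic moment generating function. Applying \eqref{intro-entropique} with $\alpha=\gamma N$ and reference $\nu^N_{\rho_1(\cdot)}\times\nu^N_{\rho_2(\cdot)}$ to $U=(1/N)\sum_{i=0}^{N-1}J\cdot\Gamma h$, the left-hand side of the lemma is bounded by
\[
R^t_N(k,\gamma)+\frac{1}{\gamma N}\Hh\!\left[\mu^N_t\,|\,\nu^N_{\rho_1(\cdot)}\times\nu^N_{\rho_2(\cdot)}\right],\quad R^t_N(k,\gamma):=\frac{1}{\gamma N}\log\int\exp\!\left(\gamma\sum_{i=0}^{N-1}J\cdot\Gamma h\right)d(\nu^N_{\rho_1(\cdot)}\times\nu^N_{\rho_2(\cdot)}),
\]
and it remains to show $\limsup_k\limsup_N R^t_N(k,\gamma)\leq 0$ for every $\gamma\leq\gamma_0$.

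I would next exploit the Taylor structure. By \eqref{Gamma}, $\Gamma h$ is the exact second-order Taylor remainder of the smooth function $\widetilde h(a,b)=\int h\,d(\nu_a\times\nu_b)$ at the base point $(\rho_1(i/N),\rho_2(i/N))$, with the affine part explicitly subtracted. Since $h$ is bounded and $\rho_1,\rho_2$ are smooth with $\rho_j\geq K>0$, $\widetilde h$ is real-analytic with uniformly bounded second derivatives on the relevant parameter range, giving $|\Gamma h|\leq C(|\eta^k(i)-\rho_1(i/N)|^2+|\xi^k(i)-\rho_2(i/N)|^2)$ on compact events and the crude uniform bound $|\Gamma h|\leq 2\|h\|_\infty$ globally. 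Under the product Poisson reference, $\eta^k(i)$ and $\xi^k(i)$ are averages of $2k+1$ independent Poisson variables with smooth parameters $\rho_j(y/N)=\rho_j(i/N)+O(|y-i|/N)$; elementary second- and fourth-moment computations then yield $\E[\Gamma h]=O(1/k)$ and $\E[(\Gamma h)^2]=O(1/k^2)$, up to exponentially small contributions from non-compact events.

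To lift these moment bounds to an estimate on $R^t_N$, I would partition $\{0,\dots,N-1\}$ into the $2k+1$ disjoint arithmetic stripes $S_j=\{i\equiv j\pmod{2k+1}\}$; within each $S_j$ the windows $[i-k,i+k]$ are pairwise disjoint, so the variables $\Gamma h_i$, $i\in S_j$, are independent under the product Poisson measure. Hölder's inequality across the $2k+1$ stripes followed by a Bernstein-type single-site estimate $\log\E[\exp(s\,\Gamma h_i)]\leq s\E[\Gamma h_i]+Cs^2\E[(\Gamma h_i)^2]$ applied with $s=(2k+1)\gamma$ after truncating $\Gamma h$ to compact Poisson events, combined with the moment bounds above, then yields $R^t_N(k,\gamma)\leq C/k+C\gamma/k$ uniformly in $N$, which vanishes in the iterated limit once $\gamma_0$ is chosen small enough that the Poisson tails remain integrable under the amplified exponent.

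The main obstacle is controlling the log-moment generating function through Hölder's amplification by the factor $2k+1$: without the Taylor-remainder subtraction the linear contribution $(2k+1)\gamma(\eta^k(i)-\rho_1(i/N))$ would survive after amplification and produce an $O(1)$ term that does not vanish with $k$, since Poisson fluctuations have size $1/\sqrt{k}$. It is precisely the cancellation of this linear part in the very definition of $\Gamma h$ that reduces the problem to the quadratic fluctuations, whose $O(1/k^2)$ scaling absorbs the $(2k+1)^2$ factor coming from Hölder; the admissible threshold $\gamma_0$ is then fixed by a careful verification that the Poisson exponential moments remain finite under the amplified exponent, which is possible because $\|\Gamma h\|_\infty\leq 2\|h\|_\infty$ is uniformly bounded.
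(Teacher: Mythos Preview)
The paper does not prove this lemma: it is quoted verbatim from Perrut (2000) and invoked as a black box, so there is no ``paper's own proof'' to compare against.  Your strategy --- apply the entropy inequality \eqref{intro-entropique} with $\alpha=\gamma N$, then control the log-moment generating function under the product Poisson reference by exploiting (i) the second-order Taylor cancellation built into $\Gamma h$ and (ii) independence after the $(2k+1)$-stripe decomposition --- is exactly the argument one finds in Perrut's paper and in Chapter~6 of Kipnis--Landim, so the plan is correct.

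There is one genuine slip.  Your claim that $\|\Gamma h\|_\infty\le 2\|h\|_\infty$ is false: while $|\widetilde h|\le\|h\|_\infty$, the subtracted affine piece $\partial_{x_j}\widetilde h(\rho_1,\rho_2)\,(\eta^k(i)-\rho_1(i/N))$ is unbounded in $\eta^k(i)$, so $\Gamma h$ grows linearly at infinity.  This matters because you invoke the bound precisely to justify finiteness of the exponential moment under the H\"older-amplified exponent $(2k+1)\gamma$.  The conclusion is nonetheless correct, but for a different reason: since $(2k+1)\,\eta^k(i)=\sum_{|y-i|\le k}\eta(y)$, the dangerous term in $(2k+1)\gamma\,\Gamma h_i$ is bounded by $C\gamma\sum_{|y-i|\le k}\eta(y)$, and each $\eta(y)$ is Poisson with uniformly bounded parameter, so $\E\exp\big(C\gamma\,\eta(y)\big)=\exp\big(\rho_1(y/N)(e^{C\gamma}-1)\big)<\infty$ for every $\gamma$.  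Once you replace the incorrect boundedness claim by this observation, the truncation-plus-Bernstein step goes through and your estimate $R^t_N(k,\gamma)\le C/k+C\gamma/k$ is valid for all $\gamma\le\gamma_0$.  An alternative (and slightly cleaner) route for the single-block log-MGF is Varadhan's lemma applied to the large-deviation principle for $(\eta^k(i),\xi^k(i))$ with speed $2k+1$: the Poisson rate function has strictly positive-definite Hessian at $(\rho_1,\rho_2)$, while $\gamma J\,\Gamma h$ vanishes to second order there with Hessian of size $O(\gamma)$, so the supremum in Varadhan's formula is $0$ once $\gamma\le\gamma_0$.
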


\begin{lemma}\label{intro-gamma} For $k=1,2$ we have
\begin{eqnarray*}  
a) & \displaystyle {1\over \lambda_k}\, \tilde{\varphi}_{k,M}(x_1,x_2) - \tilde{\beta}_{k,M}(x_1,x_2)  - \displaystyle \Big ( {x_k\over \lambda_k}-1\Big ) \tilde{\beta}_{k,M}(\lambda_1,\lambda_2)& = & 
 \hfill \cr
& & \hspace{-5cm}\displaystyle {1\over \lambda_k}\, \Gamma \varphi_{k,M}(x_1, x_2,\lambda_1,\lambda_2) - \Gamma \beta_{k,M}(x_1,x_2,\lambda_1,\lambda_2)\hfill  \crcr  \cr
b) & \lambda_k\, \tilde{\phi}_{k,M}(x_1,x_2) -  \tilde{\delta}_{1,M}(x_1,x_2) + \Big ( 
\displaystyle {x_k\over \lambda_k} - 1\Big ) \tilde{\delta}_{k,M}(\lambda_1,\lambda_2) & = &  \hfill \cr
& & \hspace{-5cm} \displaystyle {1\over \lambda_k}\, \Gamma \phi_{k,M}(x_1, x_2,\lambda_1,\lambda_2) - \Gamma \beta_{k,M}(x_1,x_2,\lambda_1,\lambda_2) \hfill  \crcr \cr
c) & \displaystyle {\lambda_1\over \lambda_2}\, \tilde{E}_{k,M}(x_1,x_2) - \tilde {e}_{k,M}(x_1,x_2)-
\Big  (\displaystyle {x_1\over \lambda_1} - {x_2\over \lambda_2} \Big ) \tilde {e}_{k,M}(\lambda_1,\lambda_2) & = & \hfill  \cr 
& & \hspace{-5cm} \displaystyle {\lambda_1\over \lambda_2}\, \Gamma E_{k,M}(x_1, x_2,\lambda_1,\lambda_2) - \Gamma e_{k,M}(x_1,x_2,\lambda_1,\lambda_2)\hfill \crcr \cr 
 d) & \displaystyle {\lambda_2\over \lambda_1}\, \tilde{R}_{M}(x_1,x_2) -  
\tilde{r}_{M}(x_1,x_2) + \displaystyle \Big ( 
{x_1\over \lambda_1} - {x_2\over \lambda_2}\Big ) \tilde{r}_{M}(\lambda_1,\lambda_2) & = & 
\hfill \crcr \cr
& & \hspace{-4.5cm} \displaystyle {\lambda_2\over \lambda_1}\, \Gamma R_{M}(x_1, x_2,\lambda_1,\lambda_2) - \Gamma r_{M}(x_1,x_2,\lambda_1,\lambda_2)
\end{eqnarray*} 
\end{lemma}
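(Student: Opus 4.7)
My plan is to prove each of the four identities by the same two-step strategy. First, I would reduce the ``transformed'' tildes $\tilde\varphi_{k,M}$, $\tilde\phi_{k,M}$, $\tilde E_{k,M}$, $\tilde R_M$ to explicit scalar multiples of the ``plain'' tildes $\tilde\beta_{k,M}$, $\tilde\delta_{k,M}$, $\tilde e_{k,M}$, $\tilde r_M$ using the Poisson change-of-variables formulas \eqref{cv}. Second, I would substitute these relations into the definition \eqref{Gamma} of $\Gamma$, compute the partial derivatives of the resulting product- or quotient-type expressions via the usual rules, and check by direct algebra that the left- and right-hand sides agree.

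For the first step, \eqref{cv} amounts to the two Poisson identities $\int X g(X-1)\,d\nu_a = a\int g(X)\,d\nu_a$ and $\int g(X+1)/(X+1)\,d\nu_a = a^{-1}\int g(X)\ind_{\{X\geq 1\}}\,d\nu_a$. Applied to the six functions defined just above the statement, and using that $\delta_k(\cdot,\cdot)$ and $r(\cdot,\cdot)$ vanish when the relevant coordinate is zero, they yield the master relations
\begin{equation*}
\tilde\varphi_{k,M}(a_1,a_2) \,=\, a_k\,\tilde\beta_{k,M}(a_1,a_2),\qquad \tilde\phi_{k,M}(a_1,a_2) \,=\, \tfrac{1}{a_k}\,\tilde\delta_{k,M}(a_1,a_2),
\end{equation*}
\begin{equation*}
\tilde E_{k,M}(a_1,a_2) \,=\, \tfrac{a_{3-k}}{a_k}\,\tilde e_{k,M}(a_1,a_2),\qquad \tilde R_{M}(a_1,a_2) \,=\, \tfrac{a_1}{a_2}\,\tilde r_{M}(a_1,a_2),
\end{equation*}
where $a_1,a_2$ denote the parameters of the $\eta$- and $\xi$-Poisson marginals. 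The precise cut-off ranges chosen in the definitions ($\ind_{\{\eta\leq M+1,\,\xi\leq M\}}$ for $\varphi_{1,M}$, $\ind_{\{\eta\leq M-1,\,\xi\leq M\}}$ for $\phi_{1,M}$, and so on) are designed so that, after the shift by $\pm 1$ in the Poisson identity, the truncations match those of $\beta_{k,M}$, $\delta_{k,M}$, $e_{k,M}$, $r_M$ exactly, without any boundary remainder. Keeping track of this alignment is the only delicate aspect of the first step.

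For the second step, I would illustrate the mechanism on (a) with $k=1$. From $\tilde\varphi_{1,M}(y_1,y_2) = y_1\tilde\beta_{1,M}(y_1,y_2)$, the product rule gives $\partial_{y_1}\tilde\varphi_{1,M}(\lambda) = \tilde\beta_{1,M}(\lambda) + \lambda_1\partial_{y_1}\tilde\beta_{1,M}(\lambda)$ and $\partial_{y_2}\tilde\varphi_{1,M}(\lambda) = \lambda_1\partial_{y_2}\tilde\beta_{1,M}(\lambda)$; substituting into \eqref{Gamma} and dividing by $\lambda_1$, the gradient terms exactly cancel those of $\Gamma\beta_{1,M}(x,\lambda)$, leaving $(x_1/\lambda_1-1)[\tilde\beta_{1,M}(x)-\tilde\beta_{1,M}(\lambda)]$, which is precisely the left-hand side of (a). Identity (b) follows by the same calculation with the quotient rule replacing the product rule on $\tilde\phi_{k,M}(y) = y_k^{-1}\tilde\delta_{k,M}(y)$. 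Identities (c) and (d) repeat the pattern on the two-variable prefactors $a_{3-k}/a_k$ and $a_1/a_2$; both partial derivatives of the prefactor now contribute, and the extra linear-in-$(x-\lambda)$ corrections they generate combine to exactly the coefficient $x_1/\lambda_1-x_2/\lambda_2$ of $\tilde e_{k,M}(\lambda)$ (resp.\ $\tilde r_M(\lambda)$) appearing on the left-hand side. I do not anticipate any real obstacle: once the Poisson relations in step one are in place, the remainder is a forced Taylor-expansion computation, the tedium of which is concentrated in (c) and (d).
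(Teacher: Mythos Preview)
Your approach is essentially the paper's: derive the ``master relations'' via the Poisson change-of-variables \eqref{cv} and then expand $\Gamma$ from \eqref{Gamma}; the paper records only the four identities $\tilde\varphi_{k,M}=\lambda_k\tilde\beta_{k,M}$, $\tilde\phi_{k,M}=\lambda_k^{-1}\tilde\delta_{k,M}$, $\tilde E_{k,M}=\tfrac{\lambda_2}{\lambda_1}\tilde e_{k,M}$, $\tilde R_M=\tfrac{\lambda_1}{\lambda_2}\tilde r_M$ and leaves the $\Gamma$-algebra implicit, whereas you spell it out. One small slip: your relation $\tilde E_{k,M}=\tfrac{a_{3-k}}{a_k}\tilde e_{k,M}$ should read $\tfrac{a_2}{a_1}$ for \emph{both} $k=1,2$, since $e_{1,M}$ and $e_{2,M}$ are both infection terms (transitions $\eta\to\xi$) and thus involve the same change-of-variables factor; this is also what the coefficient $\lambda_1/\lambda_2$ in (c) requires.
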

\begin{proof} We use substitution rule \eqref{cv} and the formula \eqref{Gamma} 
of $\Gamma$.  
We need only to remark that for $k=1,2$ 
$$\tilde{\varphi}_{k,M}(\lambda_1, \lambda_2)=\lambda_k \, \tilde{\beta}_{k,M}(\lambda_1,\lambda_2),\quad 
\tilde{\phi}_{k,M}(\lambda_1, \lambda_2)={1\over \lambda_k}\, \tilde{\delta}_{k,M}(\lambda_1,\lambda_2).$$
$$\tilde{E}_{k,M}(\lambda_1, \lambda_2) \, = \, {\lambda_2\over \lambda_1}\, \tilde{e}_{k,M}(\lambda_1,\lambda_2), \quad 
\tilde{R}_{M}(\lambda_1, \lambda_2) \, = \, {\lambda_1\over \lambda_2}\, \tilde{r}_{M}(\lambda_1,\lambda_2).$$
\end{proof}

\indent All  terms of the  upper bound \eqref{hydro-uperbound} of the relative entropy are evaluated in the same way,  it is enough to compute for example the first one. 
We shall replace the local functions $\varphi_M,\beta_M,\eta(.)$ and $\xi(.)$ by functions of the empirical density  of the particles in boxes of size $2k+1$, with $k$ going to infinity after $N$. This is possible thanks to the one block estimate, that is  proposition \ref{oneblock}.    

\begin{eqnarray*}
 \displaystyle  T_1 &  = &  \displaystyle  {1\over N} \displaystyle\sum_{i=0}^{N-1}\int_0^t \int 
 \Big [ {\varphi_{1,M}(\eta(i),\xi(i))\over \lambda_1(s,i/ N) } 
- \beta_{1,M}(\eta(i),\xi(i)) - \\
& & \hspace{2.25cm}  \left ({\eta(i)\over \lambda_1(s,i/ N)}-1 \right )\,  {\widetilde \beta}_{1,M}(\lambda_1(s,i/ N),\lambda_2(s,i/ N))\Big ] \,
 d\mu^N_s (\eta,\xi) \, ds\\
\hfill  & \leq &  \displaystyle {1\over N} \displaystyle\sum_{i=0}^{N-1}\int_0^t \int\Big [
{ \widetilde{\varphi}_{1,M}(\eta^k(i),\xi^k(i))\over \lambda_1(s,i/ N) } - 
{\widetilde \beta}_{1,M}(\eta^k(i),\xi^k(i)) - \\
 & & \hspace{2.25cm}  \noindent \hfill \displaystyle \left ( {\eta^k(i)\over \lambda(s,i/ N)}-1
\right ){\widetilde \beta}_{1,M}(\lambda_1(s,i/N),\lambda_2(s,i/ N)) 
 + r_N^s(M,k)\Big ] \, d\mu^N_s (\eta,\xi) \, ds,\\
\end{eqnarray*}
where $\displaystyle \limsup_{k \to \infty}\limsup_{N\to \infty} r_N^t(M,k)\leq 0.$
By a) of lemma \ref{intro-gamma},  we have
\begin{eqnarray*}
T_1 & \leq &\displaystyle  {1\over N} \displaystyle\sum_{i=0}^{N-1}\int_0^t \int \Big [
{1\over \lambda_1(s,i/ N)} (\Gamma \varphi_{1,M})  \left (\eta^k(i),\xi^k(i),\lambda_1(s,i/ N),\lambda_2(s,i/ N)\right )- \\ 
& & \hspace{4cm} (\Gamma \beta_{1,M}) \left (\eta^k(i),\xi^k(i),\lambda_1(s,i/ N),\lambda_2(s,i/ N)\right )\Big ]\, d\mu^N_s(\eta,\xi)\, ds.
\end{eqnarray*}
By lemma \ref{Anne},  there exists $ \gamma_0> 0,$ such that for all $\gamma\leq \gamma_0$
$$ T_1 \leq {2\over \gamma N}\int_0^t  \, \Hh \, \left [\mu^N_s| \nu^N_{\lambda_1(s,.)}\times \nu^N_{\lambda_2(s,.) }\right ] \, ds + 
 R_N^t(k,\gamma) + r_N^t(M,k),$$
where $\displaystyle \limsup_{k \to \infty} \ \limsup_{N\to \infty} R_N^t(k,\gamma)\leq 0$. Then
\begin{equation}\label{entro4} 
\begin{array}{rcl}\vspace{5pt}
& & \displaystyle {1\over N}\, \Hh\, \left [\mu_t^N| \nu^N_{\lambda_1(t,.)}\times \nu^N_{\lambda_2(t,.)}\right ]\leq  \displaystyle {1\over N }\, \Hh\, \left  [\mu^N| \nu^N_{m_1(.)}\times \nu^N_{m_2(.)}\right ] +
r_N^t(M,k) + R_N^t(k,\gamma) +  \\ \vspace{5pt}
& & \hspace{3cm}  F(M,N,T) + \displaystyle {14\over \gamma N} \int_0^t \, \Hh\,\left [\mu^N_t| \nu^N_{\lambda_1(t,.)}\times \nu^N_{\lambda_2(t,.)}\right ]ds + o(1/ N)
\end{array}
\end{equation}
Finally,  hypothesis \eqref{ci} and  Gronwall lemma imply  \eqref{zero1}, which ends the proof.
\end{proof}


\subsection{Proof of Theorem \ref{hydro-hthiv} (extension to infinite volume) }
\label{hydro-Sec: volumeinfini}
\indent To extend Theorem \ref{hydro-hth} to infinite volume, that is to all space $\Z$, 
we follow the same strategy as in Perrut (2000), and in Landim and Yau, (1995), we make a coupling  
between two processes: the first one  $(\eta^1_t, \xi^1_t)_{t\geq 0}$ on $\Z$ with $\mu^N$ as initial distribution and 
the second one $(\eta^2_t, \xi^2_t)_{t\geq 0}$ on  $\T_{C N}=\{-C N, \ldots, C N \}$ with 
$\mu^N$ restricted to $\T_{C N}$ as initial distribution.
We will prove that when $N$ goes to infinity and $C$ is large the ``difference" between those two 
processes is small in a sense to be specified later. 
 
\indent To couple $(\eta^1_t, \xi^1_t)_{t\geq 0}$ and $(\eta^2_t, \xi^2_t)_{t\geq 0}$ we distinguish between two types of particles: the coupled ones and the non-coupled ones.
More precisely, at site $x$, the $\eta_t^1(x)$-particles are divided into $\eta^*_t(x)$ and $\eta^{1*}_t(x)$. The  $\eta^*_t(x)$-particles  are associated to particles of $\eta_t^2(x)$, these couples of particles move together. All the other particles stay single. Initially $\eta_0^*(x)=\eta_0^1(x)\wedge \eta_0^2(x)$ for all $x\in \T_{C N}$. We set $\eta_0^1(x)= \eta_0^*(x)+ \eta_0^{1*}(x)$,
$\eta_0^2(x)= \eta_0^*(x)+ \eta_0^{2*}(x)$ and do the same for $\xi_0^1$ and $\xi_0^2$.  

\indent The diffusion part of the coupled generator $\overline{\Omega}_N$, is denoted by  $\overline {\Omega}_N\super {\cal D}$, where
$$\overline{\Omega}_N\super {\cal D}  =\overline{\Omega}_N\super {{\cal D},1} +\overline{\Omega}_N\super {{\cal D},2},$$ 
and  $\overline{\Omega}_N\super {{\cal D},1}$   describes  at sites $|x| < CN$ the evolution by:

$$ \begin{array}{rcl} \vspace{2pt}
\overline{\Omega}_N\super {{\cal D},1} f(\eta^*,\eta^{*1}, \eta^{*2}) & =  &  \displaystyle  \sum_{ \substack {|x|< C N\\ y \in \T_{CN} } } p(x,y)\,   \eta^*(x)\, \Big[
f((\eta^*)^{x,y},\eta^{*1}, \eta^{*2})- f(\eta^*,\eta^{*1}, \eta^{*2})\Big ]  \\ \vspace{2pt}
& + &  \displaystyle  \sum_{ \substack {|x|< C N\\  y \in \T_{CN}}}  p(x,y)\,  \eta^{1*}(x) \wedge \eta^{*2}(x) \, \Big[ f(\eta^*,(\eta^{*1})^{x,y} , (\eta^{*2})^{x,y})- f(\eta^*,\eta^{*1}, \eta^{*2})\Big ]  \\ \vspace{2pt}
& + &  \displaystyle  \sum_{\substack {|x|< C N \\ y \in \T_{CN}}} p(x,y)\,  \Big (\eta^{1*}(x)- \eta^{*2}(x)\Big )^+ \, 
\Big[ f(\eta^*,(\eta^{*1})^{x,y} , \eta^{*2})- f(\eta^*,\eta^{*1}, \eta^{*2})\Big ]  
\\ \vspace{2pt}
& + &  \displaystyle  \sum_{\substack {|x| < C N \\ y\in \T_{CN}}}  p(x,y)\,  \Big (\eta^{*2}(x)- \eta^{*1}(x)\Big )^+ \, \Big[ f(\eta^*,  \eta^{*1}, (\eta^{*2})^{x,y})- f(\eta^*,\eta^{*1}, \eta^{*2})\Big ].\end{array} $$
At  site $x = C N$, the particles of the two processes jump outside $\{-C N,\ldots, C N\}$ independently. Those of $\eta^1$ arrive at ${C N +1}$ and the others at $-C N$.   The coupled generator   $\overline{\Omega}_N\super{{\cal D},2}$ of infected individuals $\xi$ evolves according to the same rules. 

\indent The reaction part of the coupled generator $\overline{\Omega}_N$ is denoted by  
$\overline {\Omega}_N\super {\cal R}$ and defined for all cylinder function as the sum of 
$\overline{\Omega}_N\super {{\cal R}, i}$, $i=1,\ldots,5$. Let  
$\overline{ \Omega}_N\super {{\cal R}, 1}$  be the coupled generator of birth and death of healthy individuals described at sites $|x|<CN$ by:  

At  rate $\beta_1(\eta^1(x),\xi^1(x))\wedge \beta_1(\eta^2(x),\xi^2(x))$ (respectively, $\delta_1(\eta^1(x),\xi^1(x))\wedge\delta_1(\eta^2(x),\xi^2(x))$) two coupled particles 
are created (respectively, removed),  at rate  
$\big (\beta_1(\eta^1(x),\xi^1(x)) -\beta_1(\eta^2(x),\xi^2(x))\big)^+$ 
(respectively, $\big (\delta_1(\eta^2(x),\xi^2(x))- \delta_1(\eta^1(x),\xi^1(x)) \big )^+$)
a particle of $\eta^{1*}$  is created (respectively, removed), and at rate 
$\big (\beta_1(\eta^2(x),\xi^2(x))- \beta_1(\eta^1(x),\xi^1(x)) \big )^+$ 
(respectively,  $\big (\delta_1(\eta^2(x),\xi^2(x))- \delta_1(\eta^1(x),\xi^1(x)) \big )^+$)
a particle of $\eta^{2*}$  is created (respectively, removed). 
In a symmetric way  we define the coupled generator describing birth and death of infected individuals.
In the same way, we define the coupled process  of recoveries and infection (inside infection, outside infection and   recoveries of infected individuals).

\indent We denote by $\overline{E}_{\mu^N}$ the expectation of the coupled process 
$\overline{\Omega}_N$
starting from $\mu^N$. For notational simplicity, we assume that $\alpha_1 + \alpha_2\leq 1$, and we set $$\zeta_s^*(x) = \eta^{*1}_s(x) + \eta^{*2}_s(x)+ \xi^{*1}_s(x) +\xi^{*2}_s(x).$$ 
For $x \in \T_{C \,N} $,  since $\zeta(x)$ is constant for the coupled generators for the outside and inside infections  and the  recoveries,  we have:

$$\overline{\Omega}_N\super{\cal R} (\zeta^*(x) )= \overline{\Omega}_N\super{{\cal R},1} (\zeta^*(x) )
\leq   (\alpha_1+ \alpha_2)\, |\eta^1(x)-\eta^2(x)| + (\alpha_1+ \alpha_2)\, 
|\xi^1(x)-\xi^2(x)|$$
 Thus, 
\begin{equation} \label{expectation}
\overline{\Omega}_N\super {\cal R}( \zeta_s^*(x) ) \,  \leq  \,  \zeta_s^*(x).
\end{equation}
Furthermore, since the death rates are larger than the birth rates 
it exists a real $c_0>0$ such that
$$\overline{\Omega}_N\super {\cal R}\big (\eta^{1}(x) + \xi^{1}(x) \big) \leq 
\beta_1(\eta^1(x),\xi^1(x)) - \delta_1(\eta^1(x),\xi^1(x)) 
+ \beta_2(\eta^1(x),\xi^1(x)) - \delta_2(\eta^1(x),\xi^1(x))\leq c_0,$$
and since $\displaystyle {d\over dt} \overline{E}_{\mu^N}\Big [f(\eta_t,\xi_t)\Big ] = \overline{E}_{\mu^N}\Big [\overline{\Omega}_N\super {\cal R}f(\eta_t,\xi_t)\Big]$, 
\begin{equation}\label{Maj1}
\overline{E}_{\mu^N} \Big [\overline{\Omega}_N\super{\cal R}  \big ( \eta^1_t(x)+ \xi^1_t(x) \big )\Big ] \leq c_0, 
\quad  \overline{E}_{\mu^N} \big ( \eta^1_t(x)+ \xi^1_t(x) \big ) \leq \, M + t\, c_0,
\end{equation}
and 
\begin{equation}\label{Maj3}
\overline{E}_{\mu^N}(\zeta_s^*(x) ) \leq 2\, M \, + \,  2\,t\, c_0 := K_1.
\end{equation}
Let $A\in\N$ be fixed. Now we have all the necessary  tools to bound above the discrepancy between the two processes in the box $\Lambda_{AN}=\{-NA,\ldots, NA\}$. By following the same steps as in Perrut (1999) we prove first 
by using \eqref{expectation}, \eqref{Maj1} and \eqref{Maj3} that
\begin{equation}\label{hydro-diff22}\lim_{C\to\infty} \lim_{N\to\infty} \overline{E}_{\mu^N}\Big [ {1\over N} \sum_{x\in \Lambda_{AN}} \zeta^*_t(x)\Big ] = 0,
\end{equation}
and then theorem \ref{hydro-hthiv}.  

\vspace{2mm}

\noindent\textbf{Acknowledgment}.
We thank Ellen Saada for many useful advice and fruitful discussions.


\end{document}